\documentclass{elsarticle}
\usepackage{mathtools}
\usepackage{amsmath}
\usepackage{amsbsy}
\usepackage{amsthm}
\usepackage{amssymb}
\usepackage{amsfonts}
\usepackage{bm}
\usepackage{threeparttable}
\usepackage{enumerate}
\usepackage{mathrsfs}
\usepackage[all]{xy}
\usepackage{color}
\usepackage[rgb]{xcolor}
\usepackage{float}
\usepackage{graphicx}
\usepackage{subfig}
\usepackage{bookmark}
\usepackage{multirow}
\usepackage{diagbox}
\usepackage{booktabs}

\numberwithin{equation}{section}
\newtheorem{theorem}{Theorem}[section]
\newtheorem{lemma}[theorem]{Lemma}
\newtheorem{remark}{Remark}[section]
\newdefinition{definition}{Definition}
\newdefinition{example}{Example}[section]

\begin{document}

\begin{frontmatter}
\title{Provably size-guaranteed mesh generation with superconvergence}

\author[nwpu]{Xiangrong Li}
\ead{xiangrong@mail.nwpu.edu.cn}
\author[sdu]{Nan Qi}
\author[nwpu]{Yufeng Nie\corref{cor1}}
\ead{yfnie@nwpu.edu.cn}
\author[nwpu]{Weiwei Zhang}

\cortext[cor1]{Corresponding author}
\address[nwpu]{Research Center for Computational Science, Northwestern Polytechnical University, Xi'an, Shannxi 710129, P.R. China.}
\address[sdu]{Institute of Marine Science \& Technology, Shandong University, Qingdao, Shandong 250100, P.R. China}

\begin{abstract}
The properties and applications of superconvergence on size-guaranteed Delaunay triangulation generated by bubble placement method (BPM), are studied in this paper. First, we derive a mesh condition that the difference between the actual side length and the desired length $h$ is as small as ${\cal O}(h^{1+{\alpha}})$ $({\alpha}>0)$. Second, the superconvergence estimations are analyzed on linear and quadratic finite element for elliptic boundary value problem based on the above mesh condition. In particular, the mesh condition is suitable for many known superconvergence estimations of different equations. Numerical tests are provided to verify the theoretical findings and to exhibit the superconvergence property on BPM-based grids.
\end{abstract}

\begin{keyword}
  Bubble placement method, Mesh condition, FEM, Superconvergence estimation
\end{keyword}

\end{frontmatter}

\section{Introduction}
Superconvergence of finite element solutions to partial differential equations has been studied intensively for many decades \cite{Douglas1973Superconvergence, Levine1985Superconvergent, Wahlbin1995Superconvergence, Lin2004Natural}. It is shown to be an important tool to develop high-performance finite elements. Various postprocessing techniques exist to recover finite element solutions or their derivatives in order to improve accuracy, such as the popular Zienkiewicz-Zhu (ZZ) method for gradient recovery \cite{zhu1990superconvergence, Zienkiewicz1992The, zienkiewicz1992superconvergent}. 
%The superconvergence property in the gradient recovery has been applied to the posterior error estimation with great success, especially in numerical simulations in engineering problems \cite{Ainsworth1997A, Babu2001The, Zhang2004Polynomial}.

In early works, there appears a dilemma: the classic superconvergence theory has been usually adopted to specially structured grids, such as the strongly regular grids composed of equilateral triangles \cite{Huang_2008}, but the mesh generation techniques is very hard to satisfy this requirement. Thus there is a serious gap between theory of superconvergence and mesh generation. 

Fortunately, the gap is gradually closing up with the development of superconvergence theory and mesh generation technologies. From one hand, one notable example of the development is the work of Bank and Xu \cite{Bank2003Asymptotically, Bank2004Asymptotically} who studied superconvergence on mildly structured grids where most pairs of elements form an approximate parallelogram. They also proved that linear finite element solution is superclose to its linear interpolant of exact solution. Based on mildly structured grids, Xu and Zhang \cite{Xu2003Analysis} established the superconvergence estimations of three gradient recovery operators containing weighted averaging, local ${L^2}$-projection, and local discrete least-squares fitting. Huang and Xu further investigated the superconvergence properties of quadratic triangular element on mildly structured grids \cite{Huang_2008}. From the other hand, the centroidal Voronoi tessellation (CVT)-based methods have been successfully applied to develop high-quality mesh generation \cite{du1999centroidal}, and the property of superconvergence has been only verified numerically on CVT-grids by Huang \cite{huang2008centroidal}. However, there are still some burning problems. The mesh condition of mildly structured grids is hypothetic in the work of Xu, and currently there is few mesh generation technologies which could theoretically meet the mesh conditions of mildly structured grids though mildly structured grids can be generated numerically by some grids generators. Meanwhile, due to lack of the deduction of mesh condition on CVT-based grids, the superconvergence property on CVT-based grids is just verified by numerical examples without any theoretic results. 

In recent years, the so called bubble placement method has been systematically studied by Nie. \cite{nie2010node, liu2010node, qi2014acceleration}. The advantage of BPM is to generate high-quality grids on many complexly bounded 2D and 3D domains and can be easily used in adaptive finite element method and anisotropic problems \cite{Cai2013Numerical, Zhang2014An, zhang2015adaptive, Zhou2016A, wang2016npbs}. In addition, due to the natural parallelism of BPM, computational efficiency has been improved greatly to solve large-scale problems \cite{Nie2014Parallel}. Yet, superconvergence on BPM-based grids has not been explored. The goal of this paper is to analyze a mesh condition on BPM-based grids, such that superconvergence results can be obtained both theoretically and numerically.

In this paper, we will carefully investigate the superconvergence properties on BPM-based grids. Our work has two main steps. In the first step, a mesh condition where the actual length $l_e$ of any edge $e$ and the desired length $h$ differ only by high quantity of the parameter $h$:
\begin{equation}
|l_e-h|={\cal O}{(h^{1+\alpha})}, \alpha >0
\end{equation}
is derived from an established optimal model of BPM. The second major component of our analysis is two superconvergence results for linear finite elements 
\begin{equation}
{\left\| {{u_h} - {u_I}} \right\|_{1,\Omega }} ={\cal O} (h^{1+\min (\alpha, 1/2 )}),
\end{equation}
and quadratic finite elements
\begin{equation}
{\left\| {{u_h} - {\Pi_Q u}} \right\|_{1,\Omega }} ={\cal O}(h^{2+\min (\alpha, 1/2 )}).
\end{equation}
on BPM-based grids, where $u_I$ and ${\Pi_Q u}$ are the piecewise linear and quadratic interpolant for $u$ respectively. These superconvergence results can be used to derive the posteriori error estimates of the gradient recovery operator for many popular methods, like the ZZ patch recovery and the polynomial preserving recovery \cite{Xu2003Analysis,Zhang2004Polynomial}

%not only verify the superconvergence results on BPM-based grids numerically, but aim to close up the gap about superconvergence both theoretically and numerically. 
The rest of this paper is organized as follows. Section 2 gives the derivation process of mesh conditions and superconvergence results on BPM-based grids. Some numerical experiments of elliptic boundary value problem on different computational domains are given in Section 3 and further discussed in Section 4. Conclusions and future works are given in Section 5.

%Throughout the paper, we use the notation $A\lesssim B$ to represent the inequality $A\leq constant\times B$, where the $constant$ may depend only on the domain $\Omega$ and our problems. The notation $A\simeq B$ is equivalent to the statement $A\lesssim B$ and $B\lesssim A$.
\begin{figure}
	\centering
	\includegraphics[width=0.4\textwidth]{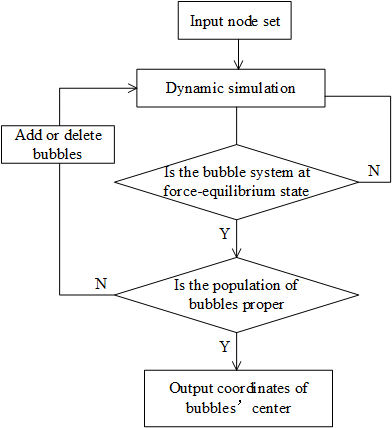}
	\caption{The flowchart of BPM.}\label{step}
\end{figure}

\section{Methodology}
\subsection{Preliminaries}
%\subsection{Review}
Bubble placement method was originally inspired by the idea of bubble meshing\cite{shimada1998quadrilateral, Yamakawa2002Quad} and the principle of molecular dynamics. The computational domain is regarded as a force region with viscosity, and bubbles are distributed in the domain. Each bubble is driven by the interaction forces\cite{Shimada1998Automatic} from its adjacent bubbles
\begin{equation}
f\left( w \right) = 
\begin{cases}
{k_0}\left( {1.25{w^3} - 2.375{w^2} + 1.125} \right)  & 0 \le w \le 1.5\\
0  & 1.5 < w
\end{cases} 	
\end{equation}
and its center is taken as one node placed in the domain, where $w=\frac{l_{ij}}{\bar{l_{ij}}}$, $l_{ij}$ is the actual distance between bubble $i$ and bubble $j$, $\bar{l_{ij}}$ is the assigned one given by users. The motion of each bubble satisfies the Newton's second law of motion. BPM can be mainly divided into 3 steps: initialization, dynamic simulation, bubble insertion and deletion operations. And BPM is regarded to be controlled by two nest loops, which is schematically illustrated in Figure \ref{step}. 

The inner loop (dynamic simulation) ensures a good bubble distribution when forces are balanced and the outer loop (insertion and deletion operations) controls the bubble number by adding or deleting bubbles such that adjacent bubbles can be tangent to each other as possible at force-equilibrium state. They both work together to get a closely-packed configuration of bubbles, so that a well-shaped and size-guaranteed Delaunay triangulation can be created by connecting the bubble centers. 

%While the principle of BPM is well documented in the literature \cite{liu2010node,qi2014acceleration,Zhou2016A}, the method is presented here in brief for the completeness of presentation with emphasis on the next part.

\subsubsection{Inner loop}
In the inner loop, the bubble motion is similar to damped vibrator. In the initial state, there is a potential energy between bubbles, which transforms to kinetic energy during simulation. The motion of bubbles also leads to energy loss as the bubble system has viscous damping force. The potential energy of the bubble system reaches its minimum at force-equilibrium state, and at this moment the resultant force exerting on each interior bubble vanishes.

\begin{figure}
	\centering
	\includegraphics[width=0.4\textwidth]{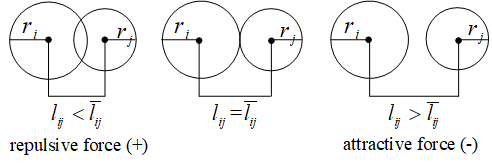}
	\caption{The distance between bubble $i$ and bubble $j$. From left to right, these are overlapping bubbles with repulsive force(the sign of this force is "+" by inter-force formula), tangent bubbles with no force between bubbles and disjoint bubbles with attracting force (the sign of this force is "-") in turn. }\label{distance}
\end{figure}

\begin{figure}
	\centering
	\includegraphics[width=0.4\textwidth]{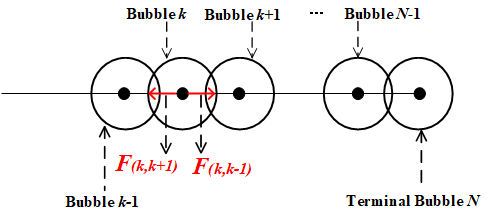}
	\caption{Force-equilibrium state in one-dimensional. For bubble $k$, there is a repulsive force ${F_{( k.k-1)} }$ from bubble $k-1$. When system reach an equilibrium state, resultant external force of the bubble $k$ is zero. Therefore,  ${F_{( k.k+1) }}$ and ${F_{( k.k-1)} }$ must be the same force in magnitude but in opposite direction, whence ${F_{( k.k+1)} }$ should be repulsive. }\label{1D}
\end{figure}

All interior bubbles at force-equilibrium state possess a specific characteristic. For any interior bubble $i$, forces of its adjacent bubbles exerting on it are the same in magnitude and sign. Let us take an 1D case to clarify. For any interior bubble $k$, if it overlaps with its left adjacent bubble $k-1$, there will be repulsive force ${F_{(k.k-1)}}$ between the bubble $k$ and $k-1$. At force-equilibrium state, with the condition of resultant external force of the bubble $k$ vanishing, there must be the same force in magnitude but in opposite direction. So the right adjacent bubble $k+1$ should overlap with the bubble $k$, and ${F_{( k.k+1)}}$ will be the same as ${F_{(k.k-1)}}$ in magnitude but in opposite direction. By inter-force formula (2.1), the sign of inter-force is positive if two adjacent bubbles overlap with each other, otherwise the sign is negative (shown in Figure \ref{distance}). By analogy, each interior bubble is like this until terminal bubble (Terminal bubble is fixed, but there exists interbubble force between terminal bubble and interior bubble.). Figure \ref{1D} visually displays the schematic of above statement. 

Now, let us introduce the definition of bubble fusion degree ${C_{ij}} = {\textstyle{{{{\bar l}_{ij}} - {l_{ij}}} \over {{\bar {l_{ij}}}}}}=1-w$, which characterizes the relative overlapping/disjoint degree of bubble $i$ and bubble $j$. It is easy to derive the following relationship.
\begin{equation} 
\left\{ \begin{array}{l}
{C_{ij}} > 0 \Rightarrow \overline {{l_{ij}}}  > {l_{ij}},\text{Bubble $i$ overlaps with bubble $j$}. \\
{C_{ij}} = 0 \Rightarrow \overline {{l_{ij}}}  = {l_{ij}},\text{Bubble $i$ is tangent to bubble $j$}.\\
{C_{ij}} < 0 \Rightarrow \overline {{l_{ij}}}  < {l_{ij}},\text{Bubble $i$ is disjoint from bubble $j$}.
\end{array} \right.
\end{equation}

Note that if the inter-force between two adjacent bubbles are the same in magnitude and sign, the variable $w$ becomes a constant by the monotonicity of inter-force formula (2.1) in the internal [0, 1.5]. Undoubtedly, now the bubble fusion degree of any two adjacent bubbles is a constant.  

For 2D case, it can be seen as 1D case in any direction, and for all interior bubbles, in whatever direction you choose, the bubble fusion degree of any two adjacent bubbles is a constant like one-dimension. The bubble distribution (or the corresponding node distribution) with this characteristics is called force-equilibrium distribution for convenience.

%	\parbox[b]{width=0.25\textwidth}{\centering
%	\includegraphics{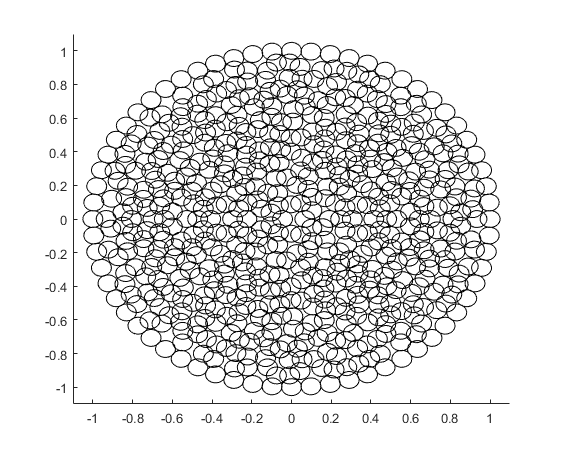}	
%	\caption{$T=0$}
%	}
%	\parbox[b]{width=0.25\textwidth}{\centering
%	\includegraphics{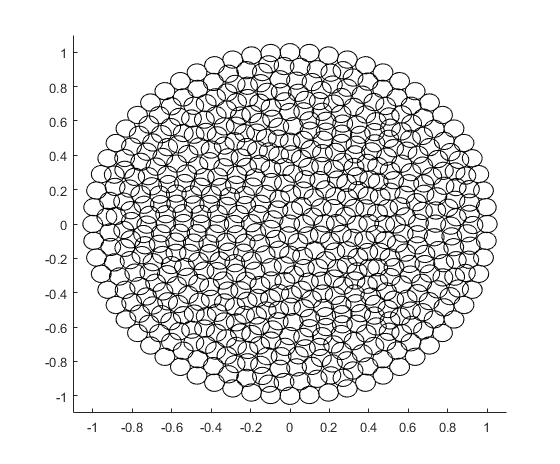}	
%	\caption{$T=100$}
%	}
%	\parbox[b]{width=0.25\textwidth}{\centering
%	\includegraphics{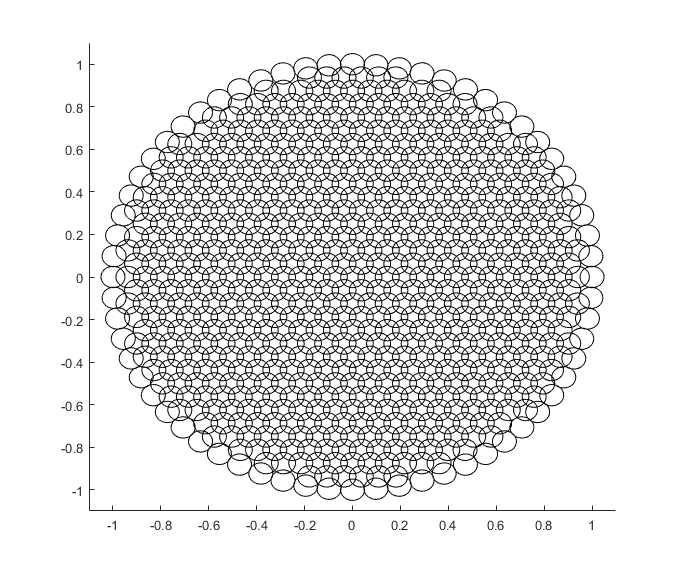}	
%	\caption{$T=200$}
%	}\\
%
%	\parbox[b]{width=0.25\textwidth}{\centering
%	\includegraphics{figures/bubble1_1.png}	
%	\caption{$T=0$}
%	}
%	\parbox[b]{width=0.25\textwidth}{\centering
%	\includegraphics{figures/bubble1_3.png}	
%	\caption{$T=100$}
%	}
%	\parbox[b]{width=0.25\textwidth}{\centering
%	\includegraphics{figures/bubble1_6.png}	
%	\caption{$T=200$}
%	}
\begin{figure}
	\centering
	\subfloat[$T=0$]{
		\includegraphics[width=0.28\textwidth]{figures/bubble1_1.png}
	}
	\subfloat[$T=100$]{
		\includegraphics[width=0.28\textwidth]{figures/bubble1_3.png}
	}
	\subfloat[$T=200$]{
		\includegraphics[width=0.28\textwidth]{figures/bubble1_6.png}
	}
	
    \subfloat[$T=0$]{
    	\includegraphics[width=0.28\textwidth]{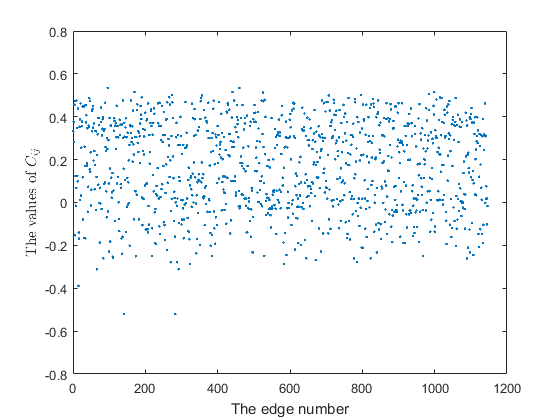}
    }
    \subfloat[$T=100$]{
    	\includegraphics[width=0.28\textwidth]{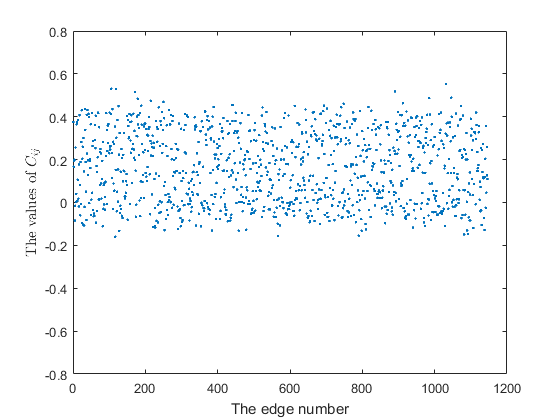}
    }
    \subfloat[$T=200$]{
    	\includegraphics[width=0.28\textwidth]{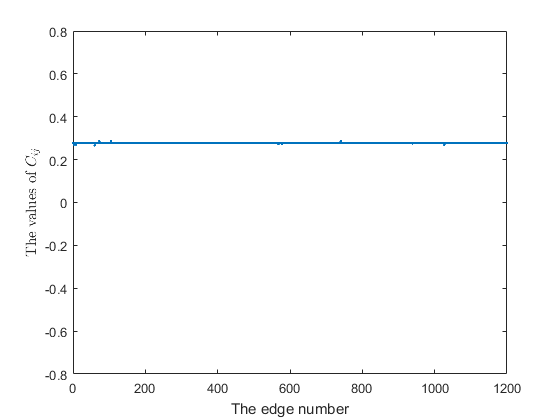}
    }
	\caption{The bubble distributions and their corresponding $C_{ij}$ values at different time steps(now the total number of bubbles $N=606$). }\label{simulation_1}
\end{figure}

%$d\left( {x,y} \right) = \left\{ \begin{array}{l}
%0.1, \sqrt {{x^2} + {y^2}}  < 2\\
%0.2 \times \left| {\sqrt {{x^2} + {y^2}}  - 2} \right| + 0.1, \sqrt {{x^2} + {y^2}}  \ge 2
%\end{array} \right$and \ref{force_11}

Choosing the assigned size function as $d\left( x,y \right)=h=0.1$ to execute BPM algorithm on an unit circle region. As shown in Figure \ref{simulation_1}, the initial bubble distribution is chaotic, and then gradually tends to the force-equilibrium distribution with time step T going on. Meanwhile, the corresponding $C_{ij}$ values tend towards a constant 0.28, which validates our inference.

In a word, bubble system will be at the force-equilibrium state by inner loop. And now the bubble fusion degree of any two adjacent bubbles is also a constant.

\subsubsection{Outer loop}
Let 
\begin{equation}
{\epsilon}^N=\mathop {\max }\limits_{ {i,j} \in {\Gamma}_N}|C_{ij}|,
\end{equation}
where $N$ is the total number of bubbles in the current loop, and $\Gamma_N$ denotes the bubble set at force-equilibrium state with the number $N$. Outer loop aims to control bubbles' number by the overlapping ratio\cite{liu2010node}. More specifically, delete the bubble whose overlapping ratio is larger, or add some new bubbles near the bubble whose overlapping ratio is smaller. If ${\epsilon}^N$ no longer reduces, iterative processes terminate.

% Likewise, the bubble fusion degree is also constant, implying that
\begin{figure}
	\centering
	\subfloat[$N=606$]{
		\includegraphics[width=0.3\textwidth]{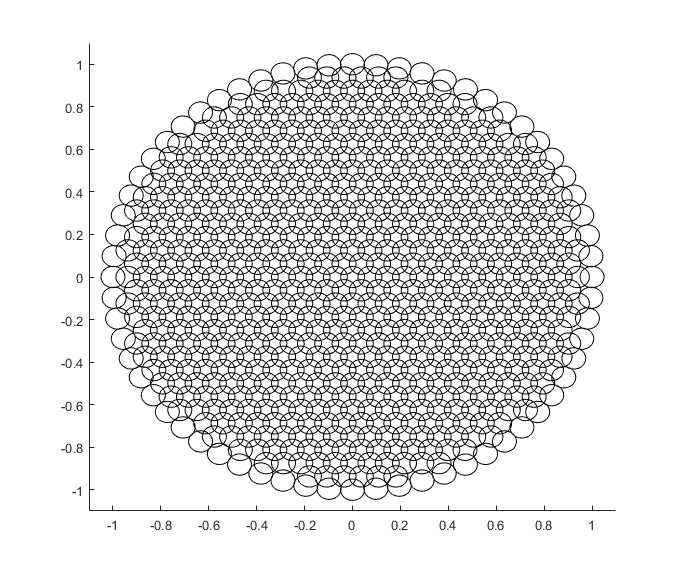}
	}
	\subfloat[$N=488$]{
		\includegraphics[width=0.3\textwidth]{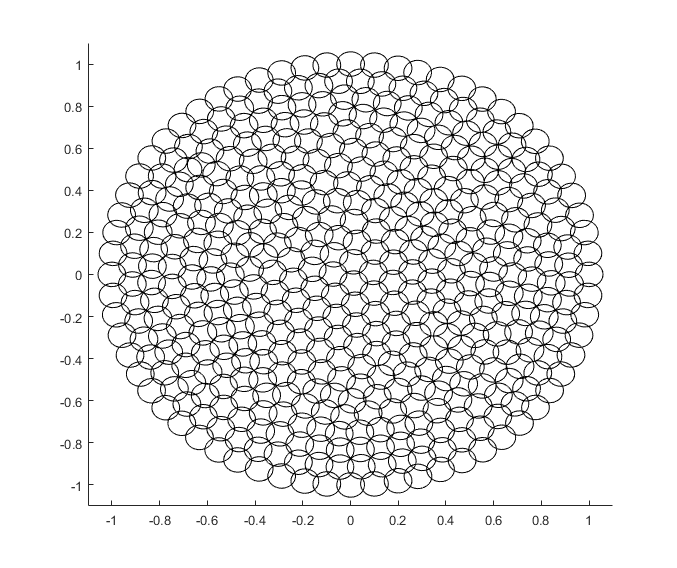}
	}
	\subfloat[$N=472$]{
		\includegraphics[width=0.3\textwidth]{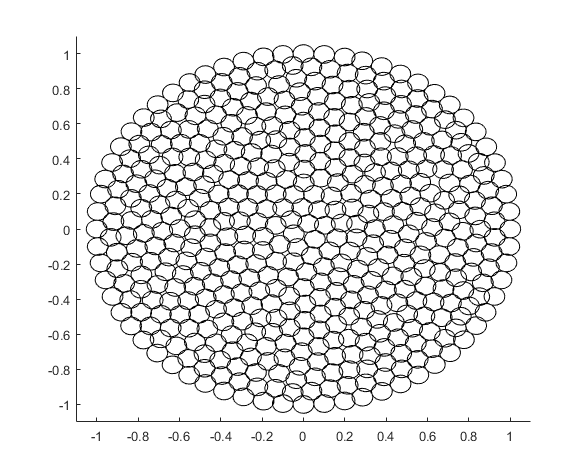}
	}	

	\subfloat[${\epsilon}^N=0.317$]{
		\includegraphics[width=0.3\textwidth]{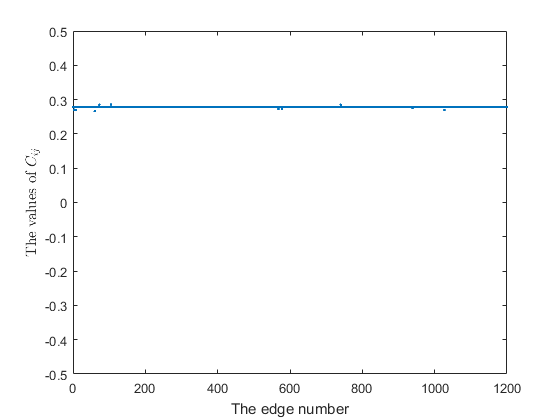}
	}
	\subfloat[${\epsilon}^N=0.186$]{
		\includegraphics[width=0.3\textwidth]{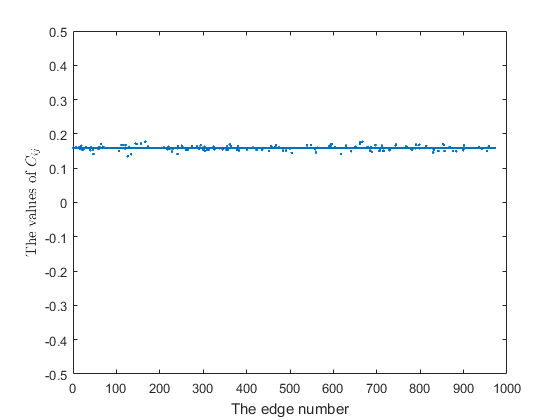}
	}
	\subfloat[${\epsilon}^N=0.112$]{
		\includegraphics[width=0.3\textwidth]{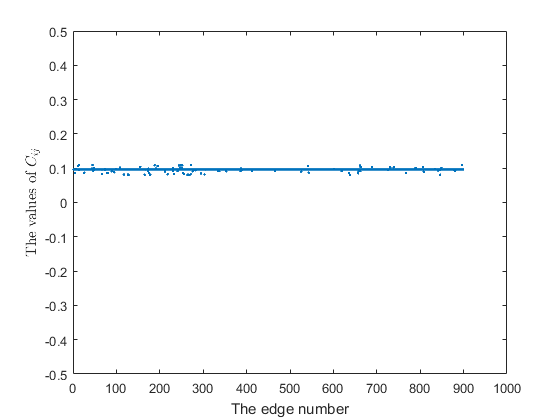}
	}
	\caption{ The bubble distribution their corresponding $C_{ij}$ values after deletion operation. }\label{simulation_2}
\end{figure}

For the same circle region in 2.1.1, the Figure \ref{simulation_2} shows the bubble distribution after several rounds of adding or deleting bubbles. It can be seen that bubbles gradually tend to be tangent without a wide range of overlapping, meaning that the actual distance of two adjacent bubbles is extremely close to their assigned size function. From the changes in $C_{ij}$ , we can clearly see that values of $C_{ij}$ and ${\epsilon}^N$ both decrease, implying that operations of adding or deleting bubbles are effective. 

%that is to say, the absolute value of bubble fusion degree is decreased, 
In summary, BPM can be described mathematically: find a proper number of bubbles ${\bar N}$, such that
\begin{equation}
{\Gamma _{\bar N}} = \left\{ {{\Gamma _N}:\mathop {\min }\limits_N \left\{ {\mathop {\max }\limits_{i,j \in {\Gamma _N}} \left| {{C_{ij}}} \right|} \right\}} \right\},
\end{equation}
where $N$ is the total number of bubbles, $\Gamma_N$ denotes the bubble set at force-equilibrium state with the number $N$, and $\Gamma_{\bar N}$ is the final output.

\begin{remark}
The properties of inner loop and outer loop are also suitable for non-uniform case. For the size function
\begin{equation}
d\left( {x,y} \right) = 
\begin{cases}
0.1 & \sqrt {{x^2} + {y^2}}  < 2,\\
0.2 \times \left| {\sqrt {{x^2} + {y^2}}  - 2} \right| + 0.1 & \sqrt {{x^2} + {y^2}}  \ge 2.
\end{cases}
\end{equation}
We execute the BPM algorithm on a square region $[-3, 3] \times [-3, 3]$, and some numerical evidences are given in Figure \ref{simulation_11}. 
\end{remark}

\begin{figure}[h]
	\centering
	\subfloat[$Nt=200$, $N=953$]{
		\includegraphics[width=0.35\textwidth]{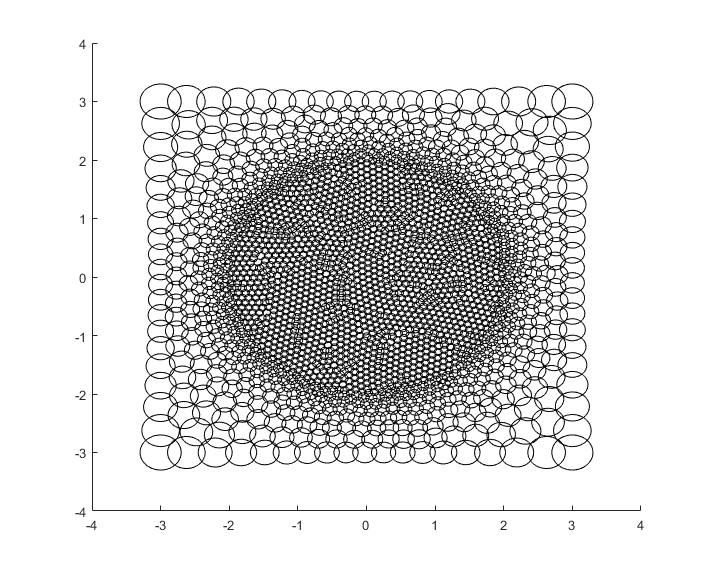}
	}
	\subfloat[$Nt=200$, $N=862$]{
		\includegraphics[width=0.35\textwidth]{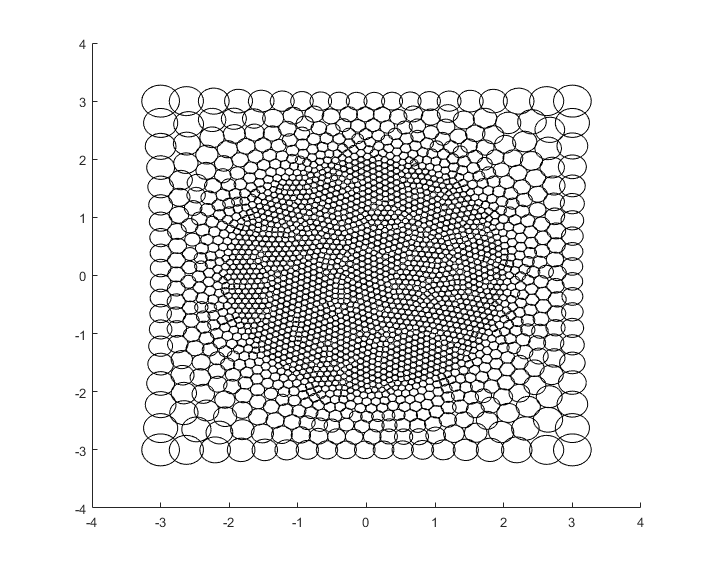}
	}

	\subfloat[$Nt=200$, $N=953$]{
		\includegraphics[width=0.32\textwidth]{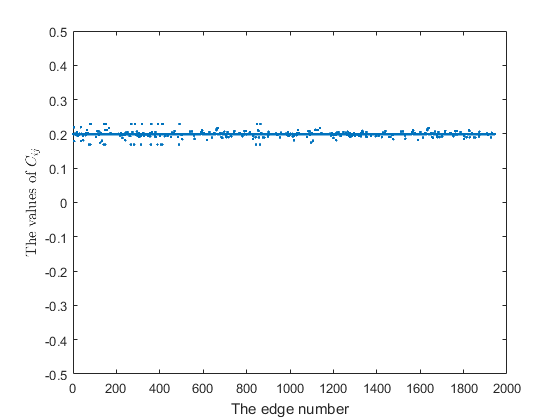}
	}
	\subfloat[$Nt=200$, $N=862$]{
		\includegraphics[width=0.32\textwidth]{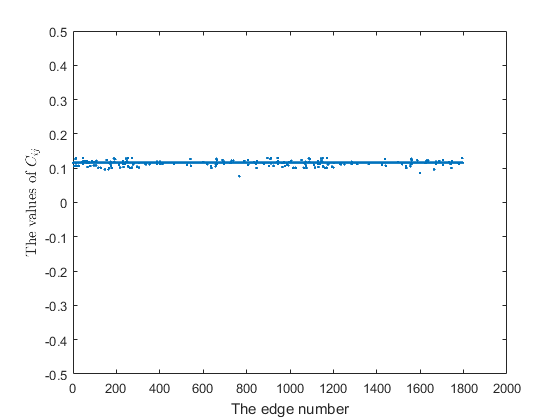}
	}
	\caption{The bubble distributions and their corresponding $C_{ij}$ values. }\label{simulation_11}
\end{figure}

\subsection{The mesh condition of BPM-based grids}
%As we know, high-quality Delaunay grids can be created by connecting the centers of bubbles\cite{nie2010node, liu2010node}. However, what does 'high-quality' concretely mean? Or what extent grids quality might be? Next, this section launched an in-depth analysis on these questions.
%According to ${\epsilon}^N=\mathop {\max }\limits_{ {i,j} \in {\Gamma}_N}|C_{ij}|$, 

We aware that $\epsilon ^{\bar N}$ is a very important value throughout the simulation. In fact, $\epsilon ^{\bar N}$ is equivalent to the relative errors of all side lengths, so it is very useful to study the mesh condition of BPM-based grids. Actually, $\epsilon ^{\bar N}$ is related to the computational domain and given size function. Next,  $\epsilon ^{\bar N}$ will be discussed separately by different circumstances. For the sake of clearness in the description, we mainly consider uniform distribution (the size function is a constant $h$), so 
\begin{equation*}
{\epsilon}^N=\mathop {\max }\limits_{ {i,j} \in {\Gamma}_N}\left| \dfrac{h-{l_{ij}}}{h}\right|.
\end{equation*}

We firstly define 'ideal subdivision' if the prescribed region can be exactly covered by $\bar N$ equilateral triangles with side size $h$. For instance, an equilateral triangle region with side length $1$ can be divided into 25 equilateral triangles with length $0.2$, so that $\epsilon ^{\bar N}=0$. 

However, the case encountered more frequently is not an 'ideal subdivision'. Such as a square with side as $1$, however the desired size is needed to be $h=0.3$. So we have to look for a subdivision with $\bar N$ elements such that $\epsilon ^{\bar N}$ is optimal. Though we don't know the exact value at first, it is easy to estimate the rough range.

For simplicity, let's start from an 1D case. A domain with length $L$ is required to be uniformly divided into several elements with size $h$. Let $N_e = \lfloor {\frac{L}{h}} \rfloor $ be the number of elements, and the remaining part after uniform subdivision $l = L - {N_e} \cdot h$, then $l \in \left[ {0,h} \right)$ (if $l = 0$, that is 'ideal subdivision'). The element $\delta$ with length $l$ has an error $e_{\delta}$, although other elements are ideal. So the mesh error $e=\max \left\lbrace e_{\delta},0\right\rbrace = \left| {h - l} \right| = \left| {h - \left( {L - {N_e} \cdot h} \right)} \right| = \left| {\left(  {{N_e} + 1} \right) \cdot h - L} \right| = O\left( h \right)$. 

\begin{figure}
	\centering
	\subfloat[before error averagely distributed]{
		\includegraphics[width=0.35\textwidth]{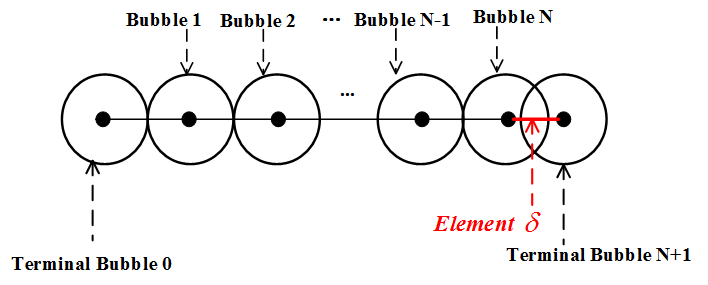}
	}
	\subfloat[after error averagely distributed]{
		\includegraphics[width=0.35\textwidth]{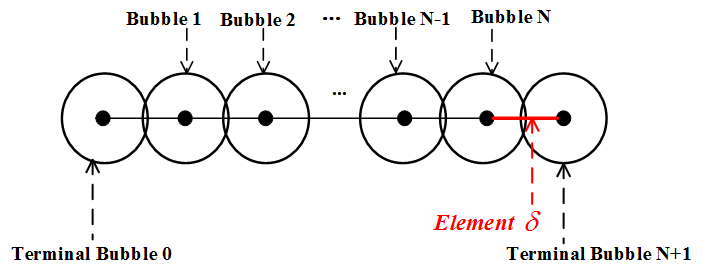}
	}
	\caption{ (a) shows the bubble distribution without force-equilibrium characteristics and (b) displays the bubble distribution at force-equilibrium state. Meanwhile, the distance of two adjacent bubbles is equivalent to the length of corresponding element.}\label{1D_errordistribution}
\end{figure}

For BPM-based grids, the bubble fusion degree of any two adjacent bubbles is a constant at force-equilibrium state, so the error of each element $|l_{ij}-h|$ is a constant, implying that the mesh error gets averaged overall elements, which is also elaborated in Figure \ref{1D_errordistribution}. Thus
\begin{equation*}
e = \left| {h - l_{ij}} \right| =  \left| {\frac{{\left( {{N_e} + 1} \right) \cdot h - L}}{{{N_e} + 1}}} \right| \le \frac{h}{{{N_e} + 1}} \le \frac{h}{{N_e}} = \frac{h}{{\left\lfloor {\frac{L}{h}} \right\rfloor }} = O\left( {{h^2}} \right),
\end{equation*}
which presents a higher accurancy.

As to 2D domain. A plane domain with area $S$ is required to be uniformly divided into several equilateral triangles whose side length is $h$. We know that the area of a equilateral triangle ${s_t}$ is $ \frac{{\sqrt 3 }}{4}{h^2} = O\left( {{h^2}} \right)$. However, the value of ${N_e} = \lfloor {\frac{S}{{{s_t}}}} \rfloor $ is not a good estimation since this way ignore the influence from the remaining part near boundaries of the prescribed region. For instance, dividing a unit circle into equilateral triangles with side length 0.3 start from its center, near the boundary there always remains a ring-like area that can not be exactly covered by equilateral triangles with length 0.3. So $\lfloor {\frac{S}{{{s_t}}}} \rfloor $ overstimates, and it shouldd be replaced by ${N_e} = \lfloor {\frac{S}{{{s_t}}}} \rfloor  - n$, $n\in {Z^ + }$ with $n \ll {N_e}$. Let $s = S - {N_e} \cdot {s_t}$ be the area of the remaining part, then $0 \le s < \left( {n + 1} \right){s_t}$. Similar to the 1D example, we have:
\begin{equation*}
\mathop {\lim }\limits_{h \to 0} \frac{{{e}}}{{{h^2}}} = \mathop {\lim }\limits_{h \to 0} \frac{{\left| {{\textstyle{{S-{N_e} \cdot s_t} \over {{N_e}}}}} \right|}}{{{h^2}}} \le \mathop {\lim }\limits_{h \to 0} \frac{{{\textstyle{{\left( {n + 1} \right){s_t}} \over {{N_e}}}}}}{{{h^2}}} \le \mathop {\lim }\limits_{h \to 0} \frac{{{\raise0.5ex\hbox{$\scriptstyle {\left( {n + 1} \right){s_t}}$}
			\kern-0.1em/\kern-0.15em
			\lower0.25ex\hbox{$\scriptstyle {\left\lfloor {\frac{S}{{{s_t}}}} \right\rfloor }$}}}}{{{h^2}}} = 0.
\end{equation*}
Consequently, the error of edge ${e_h} = \left| {h - l_e} \right|={\cal O}(h^{1+\alpha}), (\alpha>0)$, where $l_e$ is the actual length of each edge in BPM-based grids. Above results are true for all element, so for the current node number $N$ we have ${\epsilon}^N={\cal O}(h^{\alpha})$. At the moment $\epsilon ^N $ is not necessarily equal to $\epsilon ^{\bar N}$ which we are looking for in our model, but the following inequality is certainly true:
\begin{equation*}
\epsilon ^{\bar N} \le {\epsilon ^N}.
\end{equation*}

With these, the well-spaced bubbles, the bubble fusion degree of any two adjacent bubbles $|C_{ij}|={\cal O}(h^{\alpha})$, can be generated by BPM, so that size-guaranteed grids which the actual length of each edge and the given size $h$ differ only by high quantity of the parameter $h$ 
\begin{equation}
|h-l_e|={\cal O}(h^{1+\alpha})
\end{equation}
could be created by connecting the centers of bubbles. Naturally, grids are in good shape. 

\begin{remark}
In this paper, $l_{ij}$ means the distance between bubble $i$ and $j$, and $l_e$ denote the length of edge $e$. However, they are essentially equivalent but in a different form, which are clearly visible in Figure \ref{Notation}(a).
\end{remark}

\begin{remark}
For non-uniform case, it is no longer error distributed averagely, but distributed with different weights:
\begin{equation*}
{\lambda _e} = \frac{{{{\bar l}_e}}}{{\sum\limits_{k = 1}^{N_e} {{{\bar l}_k}} }}
\end{equation*}
where $\lambda_e$ is the weight for edge $e$, ${\bar {l_e}}$ is the desired length of edge $e$ (computed by the size function), ${N_e}$ means the total number of elements. In addition, we should assume that for any elements, the desired length of their three sides satisfy ${\bar {l_{e + 1}}} \simeq {\bar {l_e}} \simeq {\bar {l_{e - 1}}}$. This assumption is in accordance with the case of mesh refinement during adaptive iterations and many non-uniform triangulations. The rest will be treated as uniform case, and it will draw a conclusion that for any edge $e$, $|{\bar {l_e}}-{l_e}|={\cal O}({\bar {l_e}}^{1+{\alpha}}) ({\alpha}>0)$.
\end{remark}

%we should introduce the mesh density condition ${l_e} \simeq r_e^{1 - \mu }{{\_ {l_e}} ^\mu }$ ($\mu >0$)to limit the change in mesh size, where $\underline e$ is the edge with minimal length $\_{l_e}$ of all edges, $r_e$ is the distance from the midpoint of $\_e$ to the midpoint of $e$. Meanwhile, we assume that for any elements, their three sides satisfy ${l_{e + 1}} \simeq {l_e} \simeq {l_{e - 1}}$. The assumption of the mesh density condition is in accordance with the case of mesh refinement during adaptive iterations, and its rationality has been verified in \cite{Wu2007Can}. 

\subsection{Superconvergence on BPM-based grids}
Let us consider an interior edge shared by two elements $\tau $ and$\tau '$, shown in Figure \ref{Notation}(b). $l_e$ denote the length of edge $e$. For the element $\tau$, let ${l_{e-1}}$ and ${l_{e+1}}$ be length of two other edges. With respect to $\tau '$, ${l_{e' - 1}}$ and ${l_{e' + 1}}$ are also length of two other edges.

\begin{figure}
	\centering
	\subfloat[]{
		\includegraphics[width=0.4\textwidth]{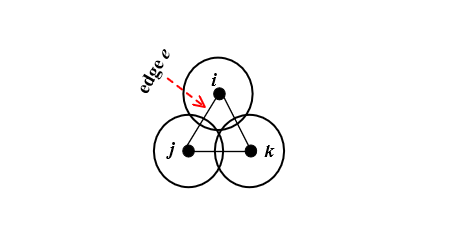}
	}
	\subfloat[]{
		\includegraphics[width=0.3\textwidth]{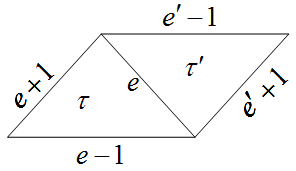}
	}
	\caption{ Notations }\label{Notation}
\end{figure}

Properties. Suppose we have a triangulation ${{\cal T}_h}$ is generated by BPM, and from Eq.(2.6),
\begin{enumerate}
	\item  For element $\tau$, using the triangle inequality, we have
	\begin{equation*}
	\left| {{l_{e + 1}} - {l_{e - 1}}} \right| = {\cal O}({h^{1 + \alpha }})
	\end{equation*}
	
	\item For two elements $\tau$ and $\tau '$, using the triangle inequality, we have
	\begin{equation*}
	\left| {{l_{e + 1}} - {l_{e' + 1}}} \right| = {\cal O}({h^{1 + \alpha }})
	\end{equation*}
\end{enumerate}

These properties are analogous to the definition of mildly structured grids \cite{Xu2003Analysis}, which pursues that two adjacent triangles (sharing a common edge) form an ${\cal O}(h^{1+\alpha})$ $({\alpha}>0)$ approximate parallelogram, i.e., the lengths of any two opposite edge differ only by ${\cal O}(h^{1+\alpha})$. Consequently, it is very easy to apply our BPM-based grids to all superconvergence estimations of mildly structured grids without any changes. 

Let $\Omega  \subset {{R}^2}$ be a bounded polygon with boundary $\partial \Omega $. Consider problem: Find $u \in V$ such that
\begin{equation}
a\left( {u,v} \right) =\int_\Omega  {\nabla u\nabla v{\kern 1pt} dx} = \left( {f,v} \right),\forall v \in V,
\end{equation}
where  $\left( { \cdot , \cdot } \right)$ denotes inner product in the space ${L^2}\left( \Omega  \right)$, and $V \subset {H^1}\left( \Omega  \right)$, if boundary conditions is different, $V$ is a little different. It is known that $a\left( { \cdot , \cdot } \right)$ is a bilinear form which satisfies the following conditions:
\begin{enumerate}
	\item (Continuity). There exists $C \ge 0$ such that
	\begin{equation*}
	\left| {a\left( {u,v} \right)} \right| \le C{\left\| u \right\|_{1,\Omega }}{\left\| v \right\|_{1,\Omega }},
	\end{equation*}
	for all $u,v \in V$.
	\item (Coerciveness). There exists $M > 0$ such that
	\begin{equation*}
	a\left( {v,v} \right) \ge M\left\| v \right\|_{_{1,\Omega }}^2,\forall v \in V.
	\end{equation*}
\end{enumerate}

Let ${V_h}^k =\{v_h:v_h \in {H^1}(\Omega),v_h|_{\tau} \in P_k(\tau)\}$, $k=1,2$, be the conforming finite element space associated with triangulation ${{\cal T}_h}$. Here $P_k$ denotes the set of polynomials with degree $\leq k$. The finite element solution ${u_h} \in {V_h}^k$ satisfies  
\begin{equation}
a\left( {{u_h},v} \right) = \left( {f,v} \right),\forall v \in {V_h}^k.
\end{equation}

The following two lemmas about some superconvergence results on linear and quadratic elements for Poisson problems, are a simple modification of [11, Lemma 2.1] and [8, Theorem 4.4].
%Denote ${I_h}^k$ be the standard finite element interpolation operator,Denote ${u_h} \in {V_h}^k$ be a finite element approximation. 

\begin{lemma}
For triangulation ${{\cal T}_h}$ generated by bubble-type mesh generation, for any ${v_h} \in {V_h}^k$
\begin{equation}
\left| {\int_\Omega  {\nabla ( {u - {u_I}} ) \nabla {v_h}} } \right| \lesssim {h^{1+\min (\alpha, 1/2 )}}{\| {{v_h}} \|_{1,\Omega }}.
\end{equation}
where ${u_I}$ is the linear interpolation of $u$ and $k=1$.
\end{lemma}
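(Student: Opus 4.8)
The plan is to transcribe the mildly-structured-grid superconvergence argument of \cite[Lemma 2.1]{Xu2003Analysis} (see also \cite{Bank2003Asymptotically}) to the present setting, the only new ingredient being that the two Properties derived from~(2.6)---namely $\left|l_{e+1}-l_{e-1}\right|={\cal O}(h^{1+\alpha})$ within an element and $\left|l_{e+1}-l_{e'+1}\right|={\cal O}(h^{1+\alpha})$ across a shared edge---furnish exactly the ${\cal O}(h^{1+\alpha})$ approximate-parallelogram hypothesis that their proof requires. First I would expand the global integral element by element,
\[
\int_\Omega \nabla(u-u_I)\cdot\nabla v_h=\sum_{\tau}\int_\tau \nabla(u-u_I)\cdot\nabla v_h,
\]
and split the triangulation into a regular part ${\cal T}_h^{r}$, consisting of interior edges whose two adjacent triangles $\tau,\tau'$ form an ${\cal O}(h^{1+\alpha})$ approximate parallelogram, and a thin boundary strip ${\cal T}_h^{b}$ of elements meeting $\partial\Omega$, whose total area is ${\cal O}(h)$.

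On each triangle, since $k=1$ makes $\nabla v_h$ piecewise constant, I would insert the standard edge-vector representation of the linear interpolation error, writing $\int_\tau\nabla(u-u_I)\cdot\nabla v_h$ as a sum of terms of the form (a second-order derivative of $u$) times (an edge vector) times $\nabla v_h$, plus a higher-order remainder, and then regroup these edge-terms according to the interior edge on which they sit. For an edge in ${\cal T}_h^{r}$, the leading contributions of the two adjacent triangles cancel exactly when $\tau\cup\tau'$ is a genuine parallelogram; here they cancel up to the defect, which by Properties~1--2 is ${\cal O}(h^{1+\alpha})$. Bounding each residual by (defect)$\,\times\,|u|_{2,\tau\cup\tau'}\times\|\nabla v_h\|_{0,\tau\cup\tau'}$ and summing over ${\cal T}_h^{r}$ with the Cauchy--Schwarz inequality extracts $\|v_h\|_{1,\Omega}$ and leaves the ${\cal O}(h^{1+\alpha})$ part of the bound.

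On the boundary strip ${\cal T}_h^{b}$ no pairing is available, so I would estimate directly, $\left|\int_\tau\nabla(u-u_I)\cdot\nabla v_h\right|\le\|\nabla(u-u_I)\|_{0,\tau}\|\nabla v_h\|_{0,\tau}$, insert the interpolation bound $\|\nabla(u-u_I)\|_{0,\tau}\lesssim h\,|u|_{2,\tau}$, and apply Cauchy--Schwarz over the strip. Because ${\cal T}_h^{b}$ has area ${\cal O}(h)$, the regularity $u\in W^{2,\infty}$ gives $\sum_{\tau\in{\cal T}_h^{b}}|u|_{2,\tau}^2\lesssim h$, whence $\sum_{\tau\in{\cal T}_h^{b}}\|\nabla(u-u_I)\|_{0,\tau}^2\lesssim h^{3}$, and this part is ${\cal O}(h^{3/2})\|v_h\|_{1,\Omega}={\cal O}(h^{1+1/2})\|v_h\|_{1,\Omega}$. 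Adding the two contributions yields the claimed ${\cal O}(h^{1+\min(\alpha,1/2)})\|v_h\|_{1,\Omega}$.

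The main obstacle is the cancellation bookkeeping of the second step: one must orient the edge-vector identity so that the leading-order terms of the two triangles in a regular pair are genuinely opposite, and then verify that the uncancelled remainder is controlled \emph{precisely} by the approximate-parallelogram defect of Properties~1--2 rather than by the crude ${\cal O}(h)$ edge length. This is the point at which the mesh condition~(2.6) is indispensable and where the present estimate is only a simple modification of \cite[Lemma 2.1]{Xu2003Analysis}; the sole new input is that BPM grids satisfy the approximate-parallelogram property, already established in Section~2.2. I would also record at the outset the regularity needed for the expansion and for the strip estimate, e.g.\ $u\in H^{3}(\Omega)\cap W^{2,\infty}(\Omega)$, so that the second-derivative terms and the interpolation bounds are legitimate.
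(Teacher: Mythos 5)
Your proposal is correct and follows essentially the same route as the paper: the paper offers no independent proof of this lemma, stating in Remark~2.3 that the argument is that of Xu--Zhang [11, Lemma 2.1] with only trivial adjustments, the substantive point being precisely the one you identify---that Properties 1--2 derived from the mesh condition (2.6) furnish the ${\cal O}(h^{1+\alpha})$ approximate-parallelogram hypothesis of mildly structured grids. Your reconstruction of that argument (edge-wise cancellation on approximate-parallelogram pairs plus a boundary-strip estimate producing the $h^{1/2}$ cap in the exponent) is exactly the intended content.
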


\begin{lemma}
For triangulation ${{\cal T}_h}$ generated by bubble-type mesh generation,  for any ${v_h} \in {V_h}^k$
\begin{equation}
\left| {\int_\Omega  {\nabla ( {u - {\Pi_Q u}} ) \nabla {v_h}} } \right| \lesssim {h^{2+\min (\alpha, 1/2 )}}{\| {{v_h}} \|_{1,\Omega }}.
\end{equation}
where ${\prod_Q u}$ is the quadratic interpolation of $u$ and $k=2$.
\end{lemma}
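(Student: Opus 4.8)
The plan is to follow the argument of Huang and Xu \cite{Huang_2008} for quadratic elements on mildly structured grids, since the BPM-based triangulation ${\cal T}_h$ has just been shown to satisfy the same approximate-parallelogram conditions (Properties 1 and 2) with the exponent $\alpha$ inherited from the mesh condition (2.6). The quantity to estimate is $a(u-\Pi_Q u,v_h)=\int_\Omega \nabla(u-\Pi_Q u)\cdot\nabla v_h$, and because $\Pi_Q u$ reproduces every quadratic polynomial on each element, the integrand is driven entirely by the third derivatives of $u$; assuming $u\in H^3(\Omega)$ (which the elliptic regularity of (2.7) supplies), the goal is to show that the pairing of this leading part with $\nabla v_h$ superconverges by $h^{\min(\alpha,1/2)}$ beyond the generic order $h^2$.

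First I would split the global integral into element contributions, $\sum_\tau\int_\tau\nabla(u-\Pi_Q u)\cdot\nabla v_h$, and on each $\tau$ replace the interpolation error by its leading Taylor term, isolated through the Bramble--Hilbert lemma. This expresses the local contribution as a trilinear form in the edge vectors of $\tau$, the third-order derivative tensor of $u$, and the piecewise-linear field $\nabla v_h$, up to a remainder of higher order that is harmless after summation.

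Next I would integrate by parts on each element to transfer the contribution onto its edges, so that every interior edge $e$ shared by $\tau$ and $\tau'$ receives two jump terms. This is the step where the geometry enters: by Property 2 the opposite edges of the pair satisfy $|l_{e+1}-l_{e'+1}|={\cal O}(h^{1+\alpha})$ and $|l_{e-1}-l_{e'-1}|={\cal O}(h^{1+\alpha})$, so the two leading jump contributions across $e$ differ only by ${\cal O}(h^{1+\alpha})$ and very nearly cancel. Summing the surviving remainders over all interior edges and invoking the Cauchy--Schwarz inequality together with shape regularity produces the $h^{2+\alpha}\|v_h\|_{1,\Omega}$ part of the bound. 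The edges lying on $\partial\Omega$, whose number is ${\cal O}(h^{-1})$, cannot be paired; they are controlled by a discrete trace inequality summed over edges, which contributes the $\alpha$-independent term $h^{2+1/2}\|v_h\|_{1,\Omega}$. Taking the smaller of the two exponents yields the stated estimate $h^{2+\min(\alpha,1/2)}\|v_h\|_{1,\Omega}$.

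The main obstacle I expect is the bookkeeping of the third-derivative cancellation. Unlike the linear case of the preceding lemma, where only second derivatives occur and the parallelogram cancellation is essentially a single identity, the quadratic case produces several distinct trilinear terms, and one must verify that each of them pairs correctly across a shared edge so that Property 2 can be applied term by term. The other delicate point is the rigorous derivation of the baseline half-order gain on the boundary edges through a discrete Sobolev/trace argument; this part is independent of $\alpha$ and, once the mesh condition (2.6) is available, transfers essentially verbatim from \cite{Huang_2008}, which is exactly why the result is a \emph{simple modification} of that work.
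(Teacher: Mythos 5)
Your proposal takes essentially the same route as the paper: the paper establishes this lemma simply by noting that the BPM mesh condition (2.6) yields the approximate-parallelogram properties of mildly structured grids and then invoking the quadratic-element analysis of Huang and Xu \cite{Huang_2008} (their Theorem 4.4), stating that the remaining adjustments are trivial. Your reconstruction of that argument's internals---elementwise reduction to edge terms, cancellation across interior edges via Property 2, and the $\alpha$-independent $h^{1/2}$ gain from the unpaired boundary edges---is consistent with how the cited proof works, so the proposal is correct.
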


\begin{remark}
The arguments for these lemmas are the same as [11, Lemma 2.1] and [8, Theorem 4.4], and some adjustments we should make are trivial. Here it's no need to say more, for details see in reference \cite{Xu2003Analysis,Huang_2008}.
\end{remark}	
%By the triangle inequality, we can transform the adjacent edges condition $|l_{e+1}-l_{e-1}|={\cal O}(1+\alpha)$ into the opposite edges conditions $|l_{e'+1}-l_{e'+1}|={\cal O}(1+\alpha)$ easily. And then 

\begin{theorem}
Assume that the solution of (2.7) satisfies $u \in {H^3}\left( \Omega  \right) \cap W_\infty ^2\left( \Omega  \right)$, and ${u_h}$ is the solution of (2.8). Let ${u_I} \in {V_h}^1$ and ${\Pi_Q u} \in {V_h}^2$ be the linear  and quadratic interpolation of $u$, respectively. For triangulation ${{\cal T}_h}$ derived from BPM-based grids, we have
\begin{equation}
{\left\| {{u_h} - {u_I}} \right\|_{1,\Omega }} ={\cal O} (h^{1+\min (\alpha, 1/2 )}),
\end{equation}
and
\begin{equation}
{\left\| {{u_h} - {\Pi_Q u}} \right\|_{1,\Omega }} ={\cal O}(h^{2+\min (\alpha,  1/2 )}).
\end{equation}
\end{theorem}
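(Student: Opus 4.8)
The plan is to run the standard supercloseness energy argument, using the coerciveness of $a(\cdot,\cdot)$ together with Galerkin orthogonality to reduce both claims to the two consistency estimates already granted in Lemma 2.1 and Lemma 2.2. The linear and quadratic cases are completely parallel, so I would write out the linear one in full and then indicate the single change needed for the quadratic one.

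First I would set $w_h = u_h - u_I \in V_h^1$ and apply coerciveness to obtain $M\|w_h\|_{1,\Omega}^2 \le a(w_h,w_h)$. Next I would decompose the right-hand side as $a(w_h,w_h) = a(u_h - u, w_h) + a(u - u_I, w_h)$. The first term vanishes by Galerkin orthogonality: since (2.8) gives $a(u_h,v_h)=(f,v_h)$ and (2.7) gives $a(u,v_h)=(f,v_h)$ for every $v_h \in V_h^1$, subtracting and taking $v_h = w_h$ yields $a(u_h-u,w_h)=0$. This is the step where the defining property (2.8) of the finite element solution enters.

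It then remains to bound the surviving term. Since $a(u-u_I,w_h)=\int_\Omega \nabla(u-u_I)\cdot\nabla w_h\,dx$, inequality (2.9) of Lemma 2.1 applies verbatim and gives $|a(u-u_I,w_h)| \lesssim h^{1+\min(\alpha,1/2)}\|w_h\|_{1,\Omega}$. Feeding this back into the coerciveness estimate produces $M\|w_h\|_{1,\Omega}^2 \lesssim h^{1+\min(\alpha,1/2)}\|w_h\|_{1,\Omega}$, and dividing by $\|w_h\|_{1,\Omega}$ (the case $w_h=0$ being trivial) gives $\|u_h-u_I\|_{1,\Omega}=\mathcal{O}(h^{1+\min(\alpha,1/2)})$, which is (2.11). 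For (2.12) I would repeat the argument word for word with $w_h = u_h - \Pi_Q u \in V_h^2$ and invoke (2.10) of Lemma 2.2 in place of (2.9), gaining the extra power of $h$.

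The genuinely substantive content lies not in this theorem but in the two lemmas that feed it: the entire superconvergence gain $h^{\min(\alpha,1/2)}$ is produced there from the mesh condition (2.6), the approximate-parallelogram properties, and the element-wise cancellation arguments adapted from \cite{Xu2003Analysis, Huang_2008}. Granted those lemmas, this theorem is an immediate consequence, and the only points needing any care are checking that the regularity hypothesis $u \in H^3(\Omega)\cap W^2_\infty(\Omega)$ is exactly what the lemmas presuppose, and that the suppressed constants are independent of $h$, so that the asymptotic statements are meaningful.
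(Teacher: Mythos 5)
Your proposal is correct and takes essentially the same route as the paper's own proof: Galerkin orthogonality reduces $a(u_h-u_I,\,u_h-u_I)$ to $a(u-u_I,\,u_h-u_I)$, Lemma 2.1 (resp.\ Lemma 2.2) bounds that term, and one factor of $\|u_h-u_I\|_{1,\Omega}$ is cancelled. The only difference is that you invoke coerciveness, $M\|w_h\|_{1,\Omega}^2 \le a(w_h,w_h)$, where the paper writes an equality $\|u_h-u_I\|_{1,\Omega}^2 = a(u_h-u_I,u_h-u_I)$; your version is actually the more careful one, since $a(v,v)$ equals only the $H^1$ seminorm squared.
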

\begin{proof}
	Taking $v_h={u_h}-{u_I}$ in Lemma 2.1, we have
	\begin{equation*}
	\begin{split}
	{\left\| {{u_h} - {u_I}} \right\|_{1,\Omega }}^2
	&=a(u_h-u_I, u_h-u_I)
	=a(u-u_I, u_h-u_I)\\
	&=\left| {\int_\Omega  {\nabla \left( {u - {u_I}} \right) \nabla {u_h-u_I}} }  \right|\\
	&\lesssim {h^{1+\min (\alpha, 1/2 )}}{\| {{u_h-u_I}} \|_{1,\Omega }}
	\end{split}	
	\end{equation*}
	The proof is completed by canceling $\|u_h-u_I\|_{1,\Omega }$ on both sides of the inequality. Similar, taking $v_h={u_h}-{\Pi_Q u}$ in Lemma 2.2, (2.10) can be easily obtained. 
\end{proof}

\section{Numerical examples}
In this section, we will report some numerical examples to support theoretical estimations and verify the superconvergence property of solving Poisson equation on BPM-based grids. The examples considered vary from 'ideal subdivision', such as a unit equilateral triangle region, to 'non-ideal subdivision'. 
%In particular, the last example is a L-shape region problem with singularity at the re-entrant corner.

In order to measure mesh shape quality simply and clearly, we define the mesh shape quality measure as the ratio between the radius of the largest inscribed circle (times two) and the smallest circumscribed circle \cite{Persson2004A}, which is very similar to the concept of `radius ratio':
\begin{equation*}
q{(a,b,c)} = \frac{{2{r_{in}}}}{{{r_{out}}}} = \frac{{{(b + c - a)(c + a - b)(a + b - c)}}}{{{abc}}}
\end{equation*}
where $a$, $b$, $c$ are the side lengths. An equilateral triangle has $q=1$. Define the average mesh quality over placement area:
\begin{equation*}
{Q_{avg}} = \frac{1}{M}\sum\limits_{m = 1}^M {{q_m}} 
\end{equation*}
where $M$ represents the number of elements, and ${q_m}$ is the mesh shape quality of the $mth$ element. In addition, the closer that ${Q_{avg}}$ value is to 1, the more regular the grid is. 

\subsection{Example 3.1: An unit equilateral triangle region}
\begin{figure}[h]
	\centering
	\subfloat[$h=0.2$]{
		\includegraphics[width=0.3\textwidth]{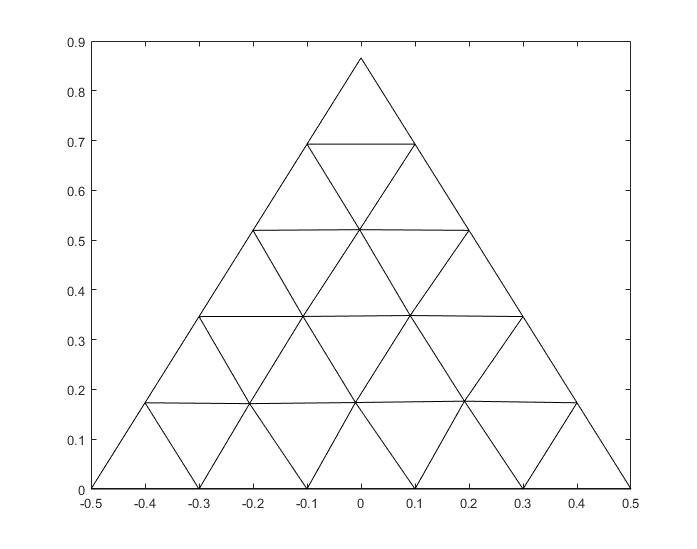}
	}
	\subfloat[$h=0.1$]{
		\includegraphics[width=0.3\textwidth]{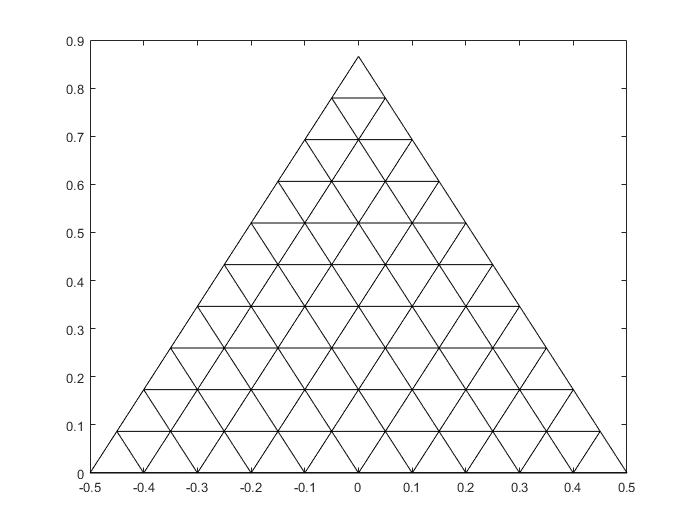}
	}
	
	\subfloat[$h=0.05$]{
		\includegraphics[width=0.3\textwidth]{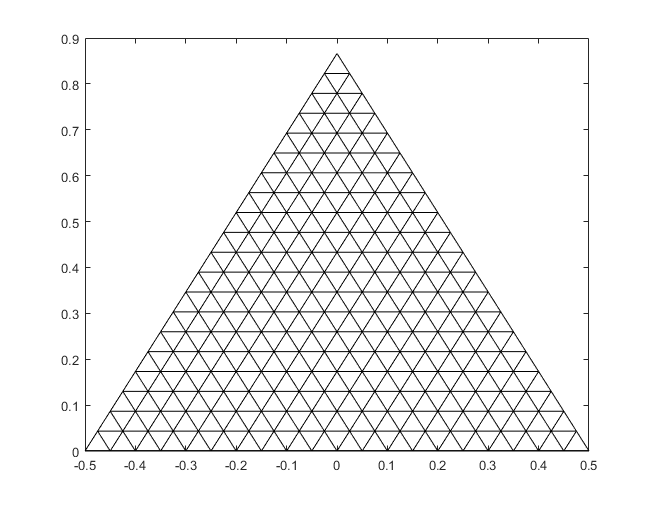}
	}
	\subfloat[$h=0.025$]{
		\includegraphics[width=0.3\textwidth]{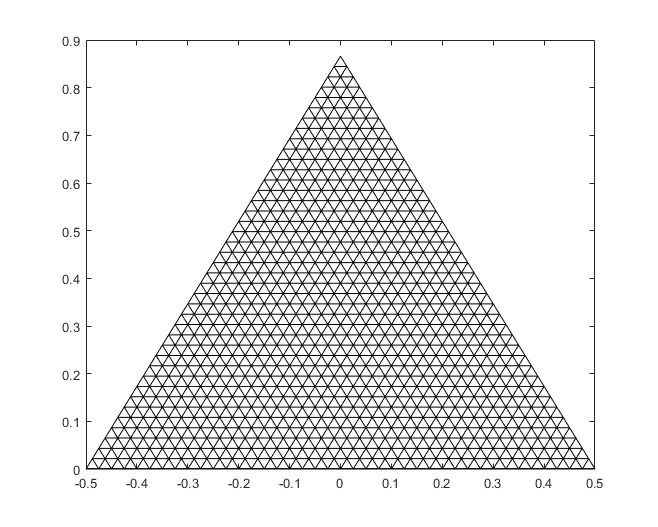}
	}
	\caption{ BPM-based grids on square region in different sizes.}\label{grids_t}
\end{figure}
	
This is an equilateral triangle region with side length 1 and we solve Poisson equation on it with Dirichlet boundary conditions. The right-hand side $f$ and the boundary conditions are chosen such that the exact solution is $u = {\cos 2 \pi x \sin 2 \pi y}$. Taking initial size $h=0.2$, when $h$ reduces by half in turn, BPM-based grid configurations with the first four sizes are selectively shown in Figure \ref{grids_t}. Obviously, the near-perfect grids can be generated by our algorithm for 'ideal subdivision' case. 

\begin{table}
	\caption{ Superconvergence results for equilateral triangle region}
	\centering
	\small
	\begin{tabular}{ccccccc}
		\toprule
		$h$  & ${\left\| {{u_h} - {u_I}} \right\|_{1,\Omega }}$ & order (k=1) & ${\left\|  {{u_h} - {\Pi_Q u}} \right\|_{1,\Omega }}$ & order (k=2) & ${Q_{avg}}$\\
		\midrule
		0.2    &  1.81E-01 &       &  8.93E-03 &      & 0.9998 \\
		0.1    &  4.30E-02 &  2.08 &  1.14E-03 &  2.97& 1.0000 \\
		0.05   &  1.05E-02 &  2.05 &  1.41E-04 &  3.02& 1.0000 \\
		0.025  &  2.63E-03 &  2.01 &  1.78E-05 &  2.96& 1.0000 \\
		0.0125 &  6.55E-04 &  2.01 &  2.30E-056 &  2.95& 1.0000 \\
		\bottomrule
	\end{tabular}\label{tab2}
\end{table}

Some numerical results are given in Table \ref{tab2}, where ${u_I}$ and ${\Pi_Q u}$ are the linear and quadratic interpolant of $u$, respectively. From Table \ref{tab2}, we see quite clearly the superconvergence of  ${\left\| {\nabla \left( {{u_I} - {u_h}} \right)} \right\|_{0,\Omega }}$ and ${\left\| { {{u_h} - {\Pi_Q u}}} \right\|_{1,\Omega }}$, with order close to 2 and 3, respectively. 
Note that when h is smaller than 0.2, the corresponding $Q_{avg}$ can achieve to 1. It indicates that the superconvergence property is inseparable from the regularity of grids.
%\begin{figure}[h]
%	\centering
%	\subfloat{
%		\includegraphics[width=0.5\textwidth]{}
%	}
%	\caption{${\left\| {\nabla \left( {{u_I} - {u_h}} \right)} \right\|_{0,\Omega }}$ and $h\_err$ versus the given size $h$ on BPM-based grids for elliptic problem. }\label{square_herr}
%\end{figure}	

\subsection{Example 3.2: A unit circle region centered at origin}
	\begin{figure}
		\centering
		\subfloat[$h=0.2$]{
			\includegraphics[width=0.3\textwidth]{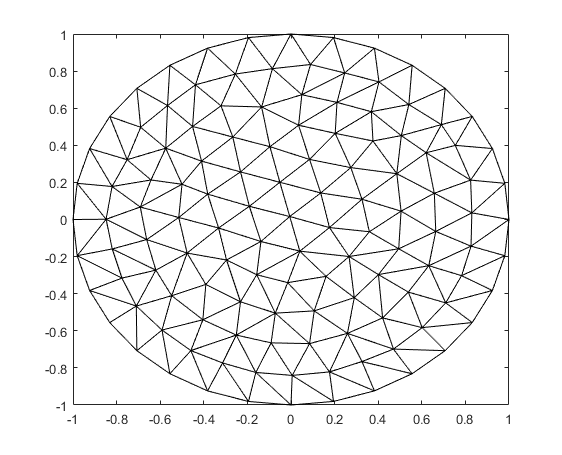}
		}
		\subfloat[$h=0.1$]{
			\includegraphics[width=0.3\textwidth]{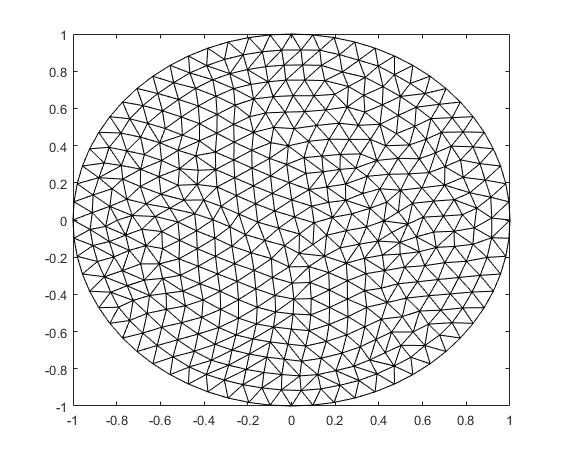}
		}
		
		\subfloat[$h=0.05$]{
			\includegraphics[width=0.3\textwidth]{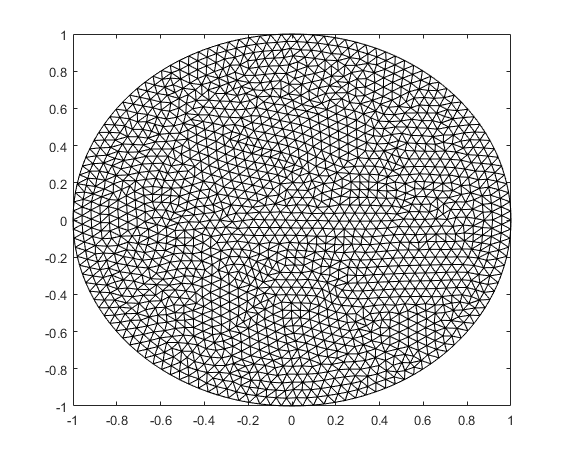}
		}
		\subfloat[$h=0.025$]{
			\includegraphics[width=0.3\textwidth]{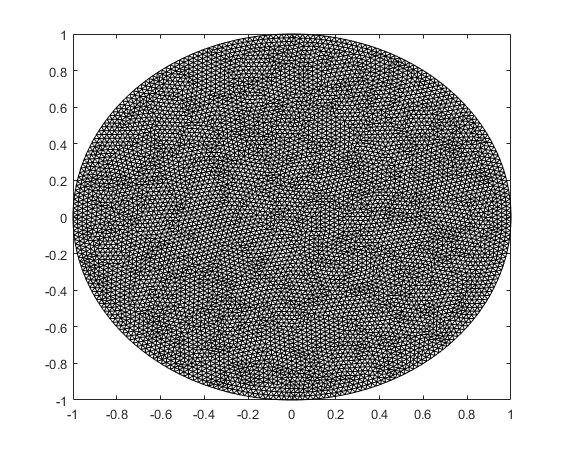}
		}
		\caption{BPM-based grids on circle region in different sizes.}\label{grids_c}
	\end{figure}
\begin{table}[h]
	\caption{Results for circle region}
	\centering
	\small
	\begin{tabular}{ccccccc}
		\toprule
		$h$  & ${\left\| {{u_h} - {u_I}} \right\|_{1,\Omega }}$ & order (k=1) & ${\left\|  {{u_h} - {\Pi_Q u}} \right\|_{1,\Omega }}$ & order (k=2) & ${Q_{avg}}$\\
		\midrule
		0.2    &  1.09E-01 &       &  1.93E-02 &      & 0.9510 \\
		0.1    &  3.93E-02 &  1.47 &  3.53E-03 &  2.45& 0.9635 \\
		0.05   &  1.36E-02 &  1.53 &  6.16E-04 &  2.52& 0.9732 \\
		0.025  &  4.82E-03 &  1.50 &  1.09E-04 &  2.50& 0.9702 \\
		0.0125 &  1.69E-03 &  1.51 &  1.96E-05 &  2.47& 0.9753 \\
		\bottomrule
	\end{tabular}\label{tab_c}
\end{table}

For a unit circle region centered at origin, the size values are taken by $0.2$, $0.1$, $0.05$, $0.025$, $0.0125$ respectively. Figure \ref{grids_c} shows that BPM-based grids are in good shape generally. Choosing the exact solution $u=sinxsiny$, some calculated results are given in Table \ref{tab_c}. Obviously, there is superconvergence phenomenon on BPM-based grids and the results clearly indicate that ${\left\| { {{u_h} - {u_I}}} \right\|_{1,\Omega }}$ and ${\left\| { {{u_h} - {\Pi_Q u}}} \right\|_{1,\Omega }}$ are close to ${\cal O}({{h^{1.50}}})$ and ${\cal O}({{h^{2.50}}})$ roughly.
Although the convergence order is lower than example 3.1, these results are still consistent with theoretic estimations (2.11) and (2.12). 

For all edges $\cal E$, denote $h_{err} = {\textstyle{{\sum {\left| {{l_e} - h} \right|} } \over {\# {\cal E}}}}$, i.e., the mean value of all edges' error. Figure \ref{error_cp}(a) shows the relationship between $h_{err}$, ${\left\| { {{u_h} - {u_I}}} \right\|_{1,\Omega }}$ and ${\left\| { {{u_h} - {\Pi_Q u}}} \right\|_{1,\Omega }}$. Graphically, ${\left\| {\nabla \left( {{u_I} - {u_h}} \right)} \right\|_{0,\Omega }}$ and $h_{err}$ have similar tendency and ${\left\| { {{u_h} - {\Pi_Q u}}} \right\|_{1,\Omega }}$ is more abrupt than ${\left\| { {{u_h} - {u_I}}} \right\|_{1,\Omega }}$, which illustrate the validity of superconvergence estimation in Theorem 2.1.

\begin{figure}
	\centering
	\subfloat[A unit circle region]{
		\includegraphics[width=0.4\textwidth]{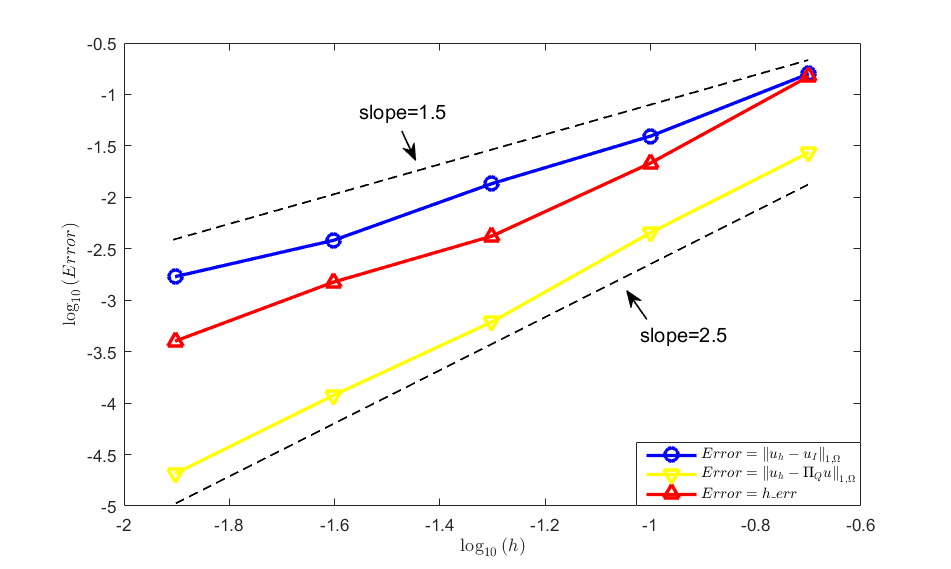}
	}
	\subfloat[A regular pentagon region]{
		\includegraphics[width=0.4\textwidth]{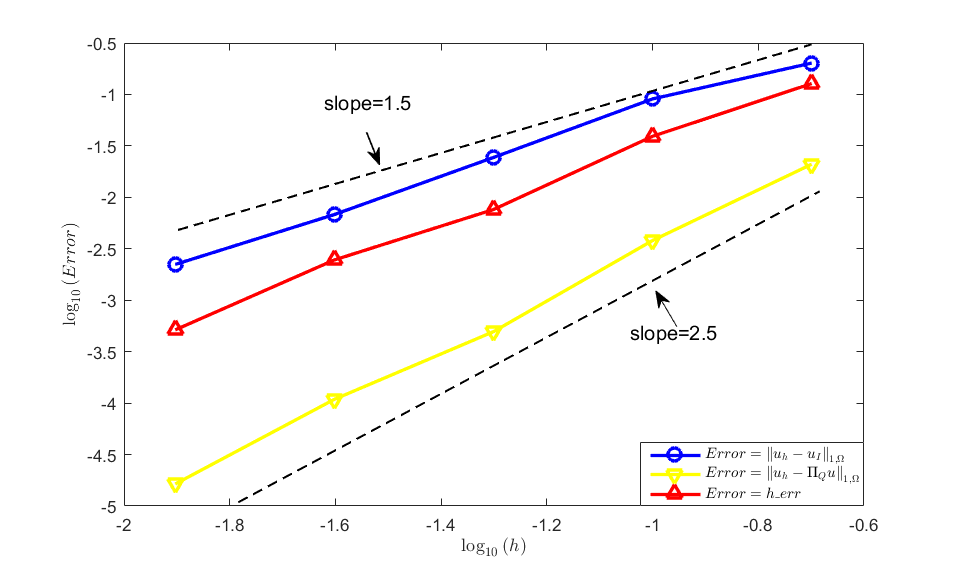}
	}
	\caption{${\left\| {\nabla \left( {{u_I} - {u_h}} \right)} \right\|_{0,\Omega }}$ and $h\_err$ versus the given size $h$ on BPM-based grids. }\label{error_cp}
\end{figure}

\subsection{Example 3.3: A regular pentagon region}
	\begin{figure}[h]
		\centering
		\subfloat[$h=0.2$]{
			\includegraphics[width=0.3\textwidth]{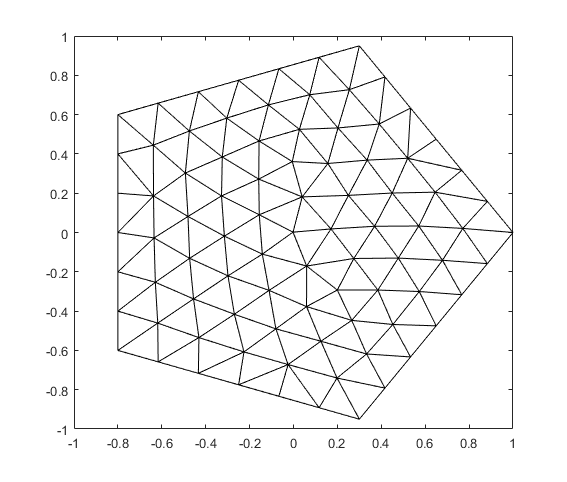}
		}
		\subfloat[$h=0.05$]{
			\includegraphics[width=0.3\textwidth]{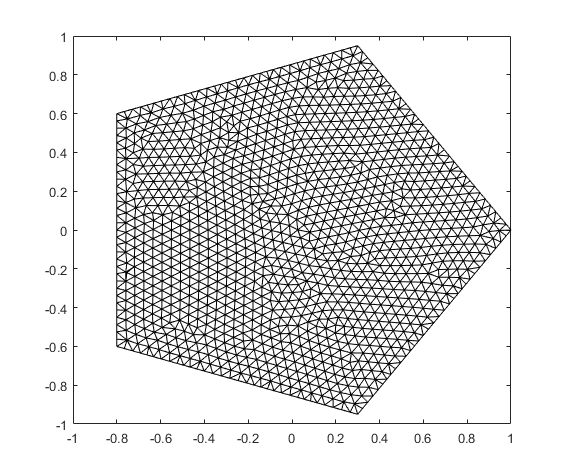}
		}
		
		\caption{ BPM-based grids on regular pentagon region in different sizes.}\label{grids_p}
	\end{figure}
	\begin{table}
		\caption{Results for regular pentagon region }
		\centering
		\small
		\begin{tabular}{cccccc}
			\toprule
			$h$  & ${\left\| {{u_h} - {u_I}} \right\|_{1,\Omega }}$ & order (k=1) & ${\left\|  {{u_h} - {\Pi_Q u}} \right\|_{1,\Omega }}$ & order (k=2) & ${Q_{avg}}$\\
			\midrule
			0.2   &  9.52E-02 &       & 9.37E-03 &     & 0.9582 \\
			0.1   &  3.34E-02 &  1.51 & 1.70E-03 & 2.46& 0.9651 \\
			0.05  &  1.09E-02 &  1.62 & 2.77E-04 & 2.62& 0.9608 \\
			0.025 &  3.74E-03 &  1.54 & 4.86E-05 & 2.51& 0.9670 \\
			0.0125 & 1.25E-03 &  1.58 & 8.31E-06 & 2.55& 0.9711 \\
			\bottomrule
		\end{tabular}\label{tab_p}
	\end{table}

Similarly, mesh size decreases by half in turn. For avoiding needless duplication, just choose two typical graphs displaying in Figure \ref{grids_p}. Choosing the exact solution $u=e^{x+y}$, all the same, Table \ref{tab_p} shows errors, convergence order and etc. As can be seen, there are still superconvergence phenomenon ${\left\|  {{u_h} - {u_I}} \right\|_{1,\Omega }}={\cal O }\left( {{h^{1.55}}} \right)$ and ${\left\| { {{u_h} - {\Pi_Q u}}} \right\|_{1,\Omega }}= {\cal O}({{h^{2.50}}})$ on regular pentagon region, verifying superconvergence estimation as well.

Overall, these numerical experiments fully show that there exists the superconvergence property on BPM-based grids. This is perhaps the results of most practical significance.

\section{Discussion}
\subsection{The superconvergence of problems with singularities}
By above experiments, we have observed superconvergence on BPM-based grids just for convex domain since superconvergence estimation requires $u \in {H^3}\left( \Omega  \right) \cap W_\infty ^2\left( \Omega  \right)$ which rules out domains with a re-entrant corner. In practice, it is well known that the solution may have singularities at corners. At this point, someone must have a question that will it appear superconvergence on problems with singularities? Although this paper has no supporting theory, we can find some illumination from the work by Wu and Zhang \cite{Wu2007Can}. They had deduced superconvergence estimation on domains with re-entrant corners for Poisson equation on mildly structured grids. The estimation is:
\begin{equation}
\begin{cases}
{\left\| {{u_h} - {u_I}}  \right\|_{1,\Omega }} \lesssim {N^{- \frac{1}{2}-\rho }},\\
{\left\| {{u_h} - {\Pi_Q u}}  \right\|_{1,\Omega }} \lesssim {N^{-1-\rho }},		
\end{cases}
\end{equation}
where $\rho$ is related to some mesh parameters. Additionally, for a 2D second-order elliptic equation, the optimal convergence rate is
\begin{equation}
\left\| {{u_h} - u} \right\|_{1,\Omega } \lesssim
\begin{cases}
N^{-\frac{1}{2}} \quad k=1,\\
N^{-1}  \quad k=2,\\	
\end{cases}
\end{equation}
where $k=1$ for the linear element and $k=2$ for the quadratic. $N$ is the total number of degrees of freedom to measure convergence rate in order to be used in adaptive finite element since the mesh is not quasi-uniform. Next, we will investigate the superconvergence on the L-shape region. 

%What's more, $\sigma$ is a little different from the previous statement. The second condition (3.2) requires that  ${e\in{\cal E}_2}$ satisfies $\sum\limits_{e \in {{\cal E}_2}} {\left| \tau  \right|+\left| \tau '  \right|=O\left( {{h^{2\sigma }}} \right)}$, but here it is modified as that the number of edges in ${{\cal E}_2}$ should satisfy $\# {{\cal E}_2} \lesssim {N^\sigma }$. In essence, these two conditions are equivalent, and they all reflect edges with poor performance are small percentage of all edges on whole domain.
%The following example will answer that question.

Just choose a typical graphs displaying in Figure \ref{grids_L}(a). Specially, meshes near re-entrant corner are in good shape, illustrating in Figure \ref{grids_L}(b). The boundary conditions are chosen so that the true solution is ${r^{{2 \mathord{\left/{\vphantom {2 3}} \right. \kern-\nulldelimiterspace} 3}}}\sin {\textstyle{2 \over 3}}\left( {\theta  + {\textstyle{\pi  \over 2}}} \right)$ in polar coordinates. Figure \ref{error} demonstrates the relationship between superconvergence results and the total number of degrees of freedom. Notice that ${\left\| {{u_h} - {u_I}} \right\|_{1,\Omega }}$ and ${\left\| {{u_h} - {\Pi_Q}} \right\|_{1,\Omega }}$ are all superconvergent, which is consistent with estimation (4.1).
\begin{figure}[h]
	\centering
	\subfloat[L-shape region]{
		\includegraphics[width=0.3\textwidth]{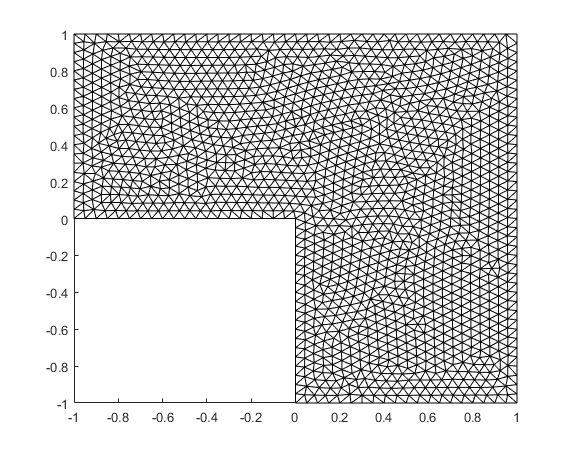}
	}
	\subfloat[local amplification of re-entrant corner]{
		\includegraphics[width=0.3\textwidth]{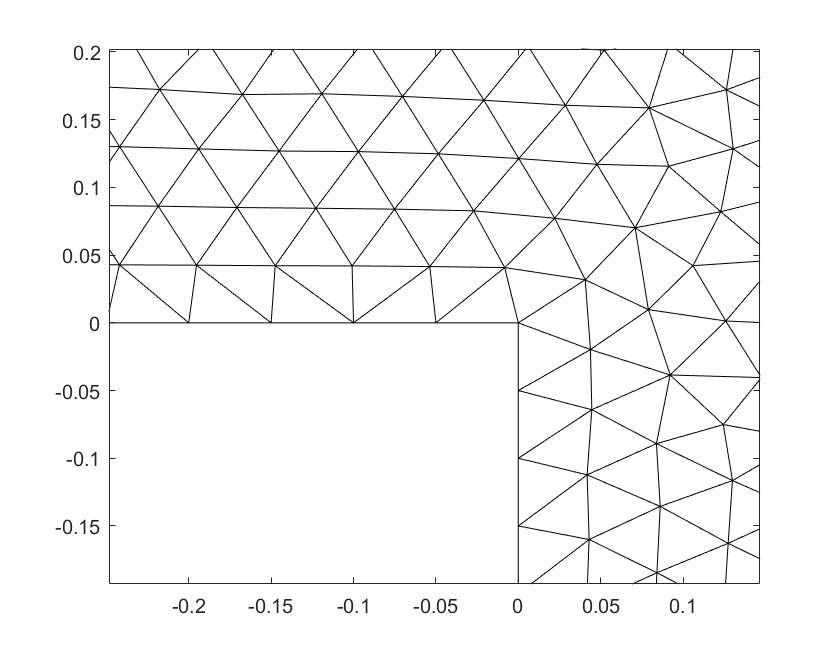}
	}
	\caption{BPM-based grids on L-shape region with $h=0.05$.}\label{grids_L}
\end{figure}

\begin{figure}
	\centering
	\subfloat{
		\includegraphics[width=0.5\textwidth]{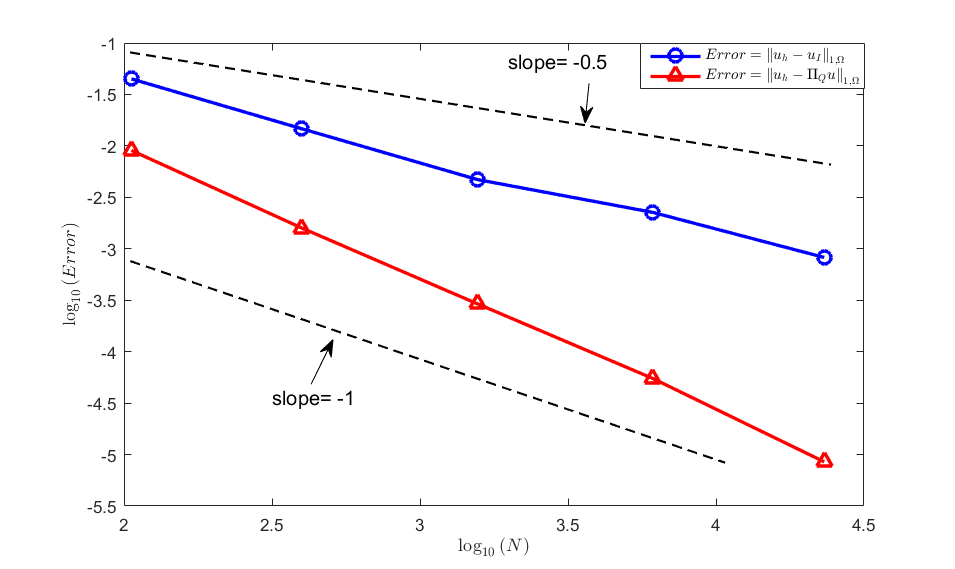}
	}
	\caption{${\left\| {{u_h} - {u_I}} \right\|_{1,\Omega }}$ and ${\left\| {{u_h} - {\Pi_Q u}} \right\|_{1,\Omega }}$ versus the total number of degrees of freedom on L-shaped domain. Dotted lines give reference slopes.}\label{error}
\end{figure}	
%Table \ref{tab_L} shows errors, convergence order and etc. Apparently, the results from Table \ref{tab_L} are not disappointing like previous experiments. Generally ${\left\|  {{u_h} - {u_I}}  \right\|_{1,\Omega }}=O\left( {{h^{1.45}}} \right)$ and ${\left\| { {{u_h} - {\Pi_Q u}}} \right\|_{1,\Omega }}= {\cal O}({{h^{2.45}}})$.
	%\begin{table}
	%	\caption{Results for L-shape region}
	%	\centering
	%	\small
	%	\begin{tabular}{cccccc}
	%		\toprule
	%		$h$  & ${\left\| {{u_h} - {u_I}} \right\|_{1,\Omega }}$ & order (k=1) & ${\left\|  {{u_h} - {\Pi_Q u}} \right\|_{1,\Omega }}$ & order (k=2) & ${Q_{avg}}$\\
	%		\midrule
	%		0.2   &  2.14E-01  &        &  8.03E-03 &     & 0.9432 \\
	%		0.1   &  8.05E-02  &   1.41 &  1.49E-03 & 2.43& 0.9515 \\
	%		0.05   &  2.91E-02 &   1.47 &  2.71E-04 & 2.46& 0.9688 \\
	%		0.025  &  1.01E-02 &   1.52 &  4.82E-05 & 2.49& 0.9624 \\
	%		0.0125  & 3.68E-03 &   1.46 &  8.76E-06 & 2.46& 0.9710 \\
	%		\bottomrule
	%	\end{tabular}\label{tab_L}
	%\end{table}

\subsection{The modification of the mesh condition}
\begin{table}
	\caption{ Results for four types of regions}
	\centering
	\small
	\begin{tabular}{ccccc}
		\toprule
		Domain & $mean$ & $var$ & $\mathop {\max }\limits_{e \in {\cal E}} \left| {{l_e} - h} \right|$ & $h_{err} = {\textstyle{{\sum {\left| {{l_e} - h} \right|} } \over {\# {\cal E}}}}$ \\
		\midrule
		Unit equilateral triangle & 0.1000            & 7.0481e-14         & 4.6798e-07           &1.6826e-07  \\
		Unit circle  & 0.0965            & 5.0513e-5         & 0.0184           &0.0093  \\
		Regular pentagon  & 0.0955            & 7.3520e-5         & 0.0213           &0.0087  \\
		L-shape  & 0.0946            & 2.0061e-4         & 0.0301           &0.0120  \\
		\bottomrule
	\end{tabular}\label{Tab1}
	\begin{tablenotes}
		\item[1]	
		Note: $Mean$ denotes the mean value of all edges' actual lengths, and $var$ is their variance value. Let $\cal E$ be the set of edges, ${l_e}$ the length of edge $e$, so $h_{err}$ represents the mean value of all edges' errors.
	\end{tablenotes}
\end{table}

In fact, not all edges of BPM-based grids could satisfy the mesh condition $|l_e-h|={\cal O}(h^{1+\alpha})(\alpha>0)$. There will be a few edges with poor performance in the actual computations especially for 'non-ideal subdivision' case. Nodes are placed in previous four types of computing regions with $h=0.1$, some useful results of these calculations are presented in Table \ref{Tab1}.

As shown, these mean values of the actual length are very closed to 0.1. However, compared with the mean error $h_{err}$, the maximal error $\mathop {\max }\limits_{e \in {\cal E}} \left| {{l_e} - h} \right|$ is underperformed except the equilateral triangle region, implying that there are 'bad' edges with larger errors in actual computations. In fact, the bubble system just reaches an approximate force-equilibrium state instead of the desired one we expect because of some numerical errors, so the bubble fusion degree of a few bubbles is slightly larger rather than a constant analyzed in Section 2.1, which are reflected in the red box in Figure \ref{bubble_distribution}. The fact leads to errors distributed under unfair conditions. In other words, additional errors are assigned to the several elements, so that there are some edges in bad behavior that the actual length and the given size $h$ differ by same order of the parameter $h$. 

\begin{figure}[h]
	\centering
	\subfloat[A unit equilateral triangle]{
		\includegraphics[width=0.3\textwidth]{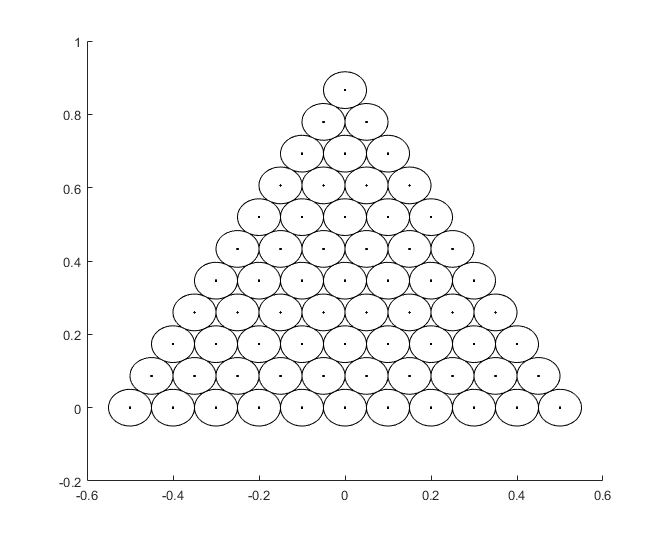}
	}
	\subfloat[A unit circle region]{
		\includegraphics[width=0.3\textwidth]{figures/bubble_3_6.png}
	}

	\subfloat[regular pentagon region]{
		\includegraphics[width=0.3\textwidth]{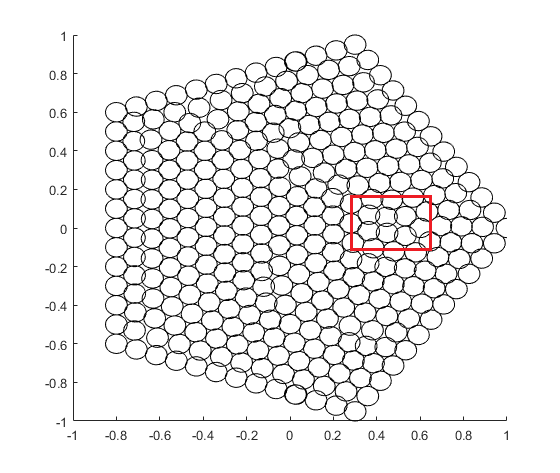}
	}
	\subfloat[L-shape region]{
		\includegraphics[width=0.3\textwidth]{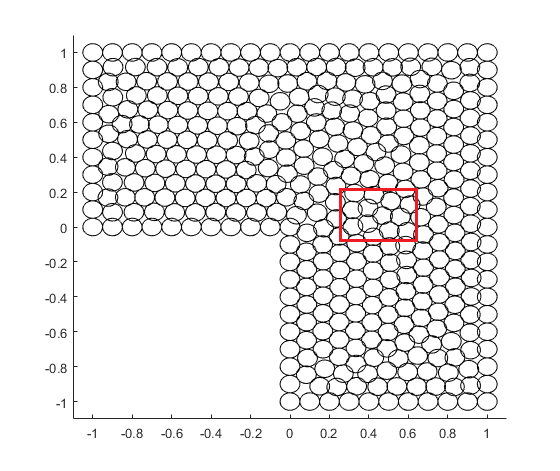}
	}
	\caption{ Bubbles distribution }\label{bubble_distribution}
\end{figure}
However, these bad edges are not too many since our algorithm guarantees the resultant external force of each bubble within the range of tolerance value closing to zero, and these are validated numerically by computational results that all variance values in Table \ref{Tab1} are very meager. In general, edges' lengths are basiclly in line with the size requirements, and there is no extreme situation that the actual length differs greatly from the given size. In the meantime, the maximal and average errors are all low values, which are also strong evidence.
%
%Choosing different size $h=0.2,0.1,0.05,0.025,0.0125$ in turn for the previous four regions, Figure \ref{err}(a) shows us the relationship between $h_{err}$ and $h$. Obviously, these curves are not linear. To be more intuitive, taking the log of $h$ and $h_{err}$, the relationship is illustrated in Figure \ref{err}(b). As can be seen, average errors $h_{err}$ of all examples would be roughly $O\left( h^{1.5}\right) $, which is consistent with the theoretical conclusion (2.8).
%\begin{figure}[h]
%	\centering
%	\subfloat{
%		\includegraphics[width=0.4\textwidth]{}
%	}
%	\subfloat{
%		\includegraphics[width=0.4\textwidth]{}
%	}
%	\caption{$h\_err$ versus the given size $h$ on five types of computing regions.}\label{err}
%\end{figure}

Drawing from the above discussions and results, the mesh condition of BPM-based grids could be modified by the following changes. Let $e$ be an edge in the triangulation ${{\cal T}_h}$ derived from BPM-based grids and its length is represented by ${l_e}$. Denote ${\cal E}={{\cal E}_1} \oplus{{\cal E}_2}$ be the set of all edges belongs to triangulation ${{\cal T}_h}$ : 
\begin{enumerate}
	\item  For each ${e\in{\cal E}_1}$  
	\begin{equation}
	\left|{l_e} - h \right| = O\left( {{h^{1 + \alpha }}} \right),
	\end{equation}
	where $\alpha$ is a positive number.
	\item As for ${e\in{\cal E}_2}$ are in 'bad' group, $\left| {l_e} - h \right| =O\left( h\right) $, but for the two elements $\tau$ and $\tau '$ sharing edge $e$, they satisfy: 
	\begin{equation}
	\sum\limits_{e \in {{\cal E}_2}} {(\left| \tau  \right|+\left| \tau '  \right|) = O\left( {{h^{2\sigma }}} \right)},
	\end{equation}
	or
	\begin{equation}
	\sharp {{\cal E}_2}  \lesssim N^{\sigma},
	\end{equation}	
	where $\sigma$ is a positive number, $\tau$ and $\tau '$ are the two elements sharing edge $e$, $N$ is the total number of all edges.
\end{enumerate}
The second condition is infected by considerations of edges with poor performance. The values of maximal error show us that there are still some edges with larger errors, but the mean and variance of all edges' actual length indicate that most edges put up a good showing with rarely or no extreme situations. For two different expressions in (4.4) and (4.5), they are essentially equivalent. And they all reflect 'bad edges' are small percentage of all edges. Combining the numerical analysis, we add the 'bad edges' group to the previous theoretical results (2.6), such that the description of BPM-based grids is more realistic. 

The accession of 'bad edges' group to our mesh condition will have effect on theoretical superconvergence estimations, but it is negligible. Just the expressions of estimations have been slightly modified:
\begin{equation}
{\left\| {{u_h} - {u_I}} \right\|_{1,\Omega }} ={\cal O} (h^{1+\min (\alpha,\sigma, 1/2 )}),
\end{equation}
and
\begin{equation}
{\left\| {{u_h} - {\Pi_Q u}} \right\|_{1,\Omega }} ={\cal O}(h^{2+\min (\alpha, \sigma, 1/2 )}).
\end{equation}
Note that 'bad edges' group has been considered into mildly structured grids by Xu \cite{Xu2003Analysis}, so it will not create difficulties to theoretical derivations. In particular, the superconvergence property of numerical experiments in Section 3 don't get affected. Actually, 'bad edges' group is introduced just for describing BPM-based grids in line with the actual computation.

\section{Conclusion}
By analysing the properties of BPM, some mesh conditions of BPM-based grids on any bounded domain are derived. It is well to be reminded that our mesh conditions can be applied to the different superconvergence estimation analyzed by many scholars, like R. E. Bank, J. Xu, H. Wu and etc. As a result,  superconvergence estimation is discussed on BPM-based grids as the second work in this paper. Notably, this is the first time that the mesh conditions are theoretically derived and successfully applied to the existing estimations. Further, our conclusions can be applied to the posterior error estimation and adaptive finite element methods for improving precision of finite element solution. 

Though the initial study on the superconvergence phenomenon of BPM-based grids is made for a classical model of two-dimensional second order elliptic equation, we will consider concrete superconvergence post-processing as well as its theoretical estimation on BPM-based grids and expect greater advantages when solve more complex systems of equations.

\section*{Acknowledgments}
This research was supported by National Natural Science Foundation of China (No.11471262 and No.11501450) and the Fundamental Research Funds for the Central Universities (No.3102017zy038). Dr. Nan Qi would like to thank "the Fundamental Research Funds" of the Shandong University.

\section*{Reference}
\bibliographystyle{elsarticle-num}
\bibliography{Reference.bib}

\end{document}